\newcommand*{\gp}[1]{\langle\;#1\;\rangle}
\newcommand*{\order}[1]{\vert #1 \vert}
\newcommand*{\Y}{\mathbin{\mathsf Y}}
\newtheorem{theorem}{Theorem} 
\newtheorem{lemma}{Lemma}
\newtheorem{corollary}{Corollary}
 \DeclareMathOperator{\supp}{supp} 
\DeclareMathOperator{\im}{im} 
\begin{document}

\title[On the Unitary Subgroups of group algebras]{On the Unitary Subgroups of group algebras}

\author{Zsolt Adam Balogh}

\address{Department of Math. Sci, College of Science, United Arab Emirates University, Al Ain, UAE}
\email{baloghzsa@gmail.com}


\subjclass[2000]{16S34, 16U60}
\keywords{ group ring, group of units, unitary subgroup}

\begin{abstract}
Let $FG$ be the group algebra of a finite $p$-group $G$ over a finite field $F$ of characteristic $p$ and $*$ the classical involution of $FG$. The $*$-unitary subgroup of $FG$, denoted by $V_*(FG)$, is defined to be the set of all normalized units $u$ satisfying the property $u^*=u^{-1}$.
In this paper we give a recursive method how to compute the order of the $*$-unitary subgroup for many non-commutative group algebras. We also prove a variant of the modular isomorphism question of group algebras, where $F$ is a finite field of characteristic two, that is $V_*(FG)$ determines the basic group $G$ for all non-abelian $2$-groups $G$ of order at most $2^4$.  
\end{abstract}

\maketitle

\section{Introduction}

Let $FG$ be the group algebra of a finite $p$-group $G$ over a finite field $F$ of characteristic $p$. Let $V(FG)$ denotes the group of normalized units in $FG$. The description of the structure of $V(FG)$ is a central problem in the theory of group algebras and it has been investigated by several authors. 
For an excellent survey on group of units of modular group algebras we refer the reader to \cite{Bovdi_survey}.

An element $u \in V(FG)$ is called unitary if $u^*=u^{-1}$, with respect to the
classical $*$-involution of $FG$ (the linear extension of the involution on $G$ which sends each element of $G$ to its inverse). The set of all unitary elements of $V(FG)$ forms a subgroup of $V(FG)$ which is denoted by $V_*(FG)$ and is called $*$-unitary subgroup. This subgroup plays an important role of studying the structure of the group of units of group algebras. The $*$-unitary subgroup has been investigated in several papers (\cite{Bovdi_Erdei_II}, \cite{Bovdi_Erdei_I},  \cite{Bovdi_Szakacs_II}, \cite{Bovdi_Kovacs_I}, \cite{Bovdi_Grichkov}, \cite{Creedon_Gildea_I}, \cite{Creedon_Gildea_II}, \cite{spinelli_2},  \cite{spinelli_1}). 

The order of $*$-unitary subgroup when $G$ is a $p$-group and $p$ is an odd prime is given in  \cite{Bovdi_Szakacs_III} and \cite{Bovdi_Rosa_I}. 
To compute the order of $V_*(FG)$  when $G$ is a $2$ group and $p=2$ is an open and is a particularly challenging problem. It is to be expected that the order is divisible by $|F|^{\frac{1}{2}(\order{G}+\order{G_T})-1}$, where $G_T$ is the set of elements of order two in $G$.
In paper \cite{Bovdi_Szakacs_III} the authors confirmed this conjecture for abelian $2$-groups and finite fields of characteristic $2$. The conjecture was confirmed for dihedral and generalized quaternion groups in \cite{Bovdi_Rosa_I}.
In the second and third sections we give a recursive method how to compute the order of the $*$-unitary subgroups and confirm the conjecture for some non-abelian $2$-group classes.   

The modular isomorphism problem is an old and unanswered problem in the theory of group representation. A stronger variant of the problem is said to be the \emph{isomorphism problem of normalized units} (UIP) is due to Berman \cite{Berman_I}. Let $F$ be a finite field of characteristic $p$, $G$ and $H$ be finite $p$-groups such that $V(FG)$ and $V(FH)$ are isomorphic. 
One may ask whether $G$ and $H$ are isomorphic groups? 
The studies in \cite{Balogh_Bovdi_I}, \cite{Balogh_Bovdi_II} and \cite{konovalov} resulted in proving the conjecture for some group classes.
The $*$-unitary group of a group algebra is a small subgroup in $V(FG)$ so it is interesting to ask whether this smaller subgroup determines the basic group $G$ or not.
This problem is called the \emph{$*$-unitary isomorphism problem} (*-UIP).
In the last section we prove that $V_*(FG)\cong V_*(FH)$ implies that $G\cong H$ for all non-abelian groups $G$ and $H$ of order at most $2^4$, where $F$ is any finite field of characteristic two.

\section{On the order of unitary subgroups}

Let $G$ be a finite $2$-group. We will denote by $G[2^i]$ the subgroup of $G$ generated by the elements of order $2^i$.
We use the notation $G^{2^i}$ for the subgroup $\gp{ g^{2^i} \,\vert\,g \in G }$.  Let $\zeta(G)$ be the center and
$G'$ the commutator subgroup of $G$, respectively. 
Let $\supp(x)$ denote the support of $x\in FG$ and
$x^g=g^{-1}xg$, where $g\in G$. We define $\widehat{C}=\sum_{g\in C} g$, where $C$ is a subset of $G$. Throughout this paper $|S|$ denotes the cardinality of the finite set $S$ and $\order{g}$ the order of $g\in G$. Let $G_T$ be the set of elements of order two in $G$, that is $G_T=\{g \in G \;|\; g^2=1 \}$. 

The following two lemma will be useful.
\begin{lemma}(\cite[Theorem 2]{Bovdi_Szakacs_I})\label{szakacs}
	Let $G$ be a finite abelian $2$-group and ${F}$ a finite field of characteristic two.
	Then \[\order{V_*({F}G)}=\order{G^2[2]}\cdot |F|^{\frac{1}{2}(\order{G}+\order{G_T})-1}.\]
\end{lemma}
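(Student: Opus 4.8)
The plan is to realise $V_*(FG)$ as the kernel of a norm-type endomorphism of $V(FG)$ and then to compute the order of its image. Write $q=\order F$ and $V=V(FG)$. Since $G$ is abelian the algebra $FG$ is commutative, so $*$ is an $F$-algebra automorphism with $*^2=\mathrm{id}$, and the map
\[
\psi\colon V\longrightarrow V,\qquad \psi(u)=uu^*,
\]
is a group homomorphism: $\psi(uv)=uv v^* u^*=(uu^*)(vv^*)=\psi(u)\psi(v)$ by commutativity. Its kernel is exactly $V_*(FG)$, since $uu^*=1$ is equivalent to $u^*=u^{-1}$. Hence $V/V_*(FG)\cong\im\psi$ and, using the standard fact $\order V=q^{\order G-1}$ (recall $FG$ is local with nilpotent augmentation ideal $\Delta$ and $V=1+\Delta$),
\[
\order{V_*(FG)}=\frac{q^{\order G-1}}{\order{\im\psi}}.
\]
It therefore suffices to determine $\order{\im\psi}$. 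Two harmless observations already cut the problem down: every $g\in G$ satisfies $gg^*=gg^{-1}=1$, so $G\le V_*(FG)=\ker\psi$; and if $u=\sum_g a_g g$, then the identity coefficient of $uu^*$ is $\sum_g a_g^2=(\sum_g a_g)^2=1$ automatically, so the constraints defining $V_*(FG)$ are only the vanishing of the non-identity coefficients.

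Next I would locate $\im\psi$. Every value $\psi(u)=uu^*$ is $*$-symmetric, so $\im\psi$ lies in the group $S$ of symmetric normalized units, which is precisely the group of normalized units of the fixed subalgebra $(FG)^+=\{x\in FG: x^*=x\}$. A basis of $(FG)^+$ is given by the $*$-fixed elements $h\in G_T$ together with the orbit sums $g+g^{-1}$ over the two-element orbits $\{g,g^{-1}\}$ of inversion; counting these gives $\dim_F(FG)^+=\tfrac12(\order G+\order{G_T})$. As $(FG)^+$ is again local with residue field $F$, one gets $\order S=q^{\frac12(\order G+\order{G_T})-1}$, and consequently
\[
[V:S]=\frac{q^{\order G-1}}{\order S}=q^{\frac12(\order G-\order{G_T})}.
\]
In view of the displayed formula for $\order{V_*(FG)}$, the lemma is equivalent to the single statement $\order{\im\psi}=[V:S]/\order{G^2[2]}$; that is, the image of the norm map must fall short of the full symmetric index $[V:S]$ by exactly the factor $\order{G^2[2]}$.

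The crux, and the step I expect to be the main obstacle, is this last computation of $\order{\im\psi}$. On $S$ itself $\psi$ acts as squaring (since $u^*=u$ there), so $S^2\subseteq\im\psi\subseteq S$, and in characteristic two $(1+x)^2=1+x^2$ with $x\mapsto x^2$ additive, so these squares are governed by the Frobenius on $\Delta$. The symmetric units that arise as genuine norms $uu^*$ but are not such squares are produced by the two-element orbits, and an explicit calculation indicates that they are detected precisely by the order-two squares: for suitable $u$ one obtains norms of the shape $\widehat{G}+h$ with $h\in G^2[2]$, and these contribute exactly the subgroup $G^2[2]$. To convert this into the exact equality $\order{\im\psi}=[V:S]/\order{G^2[2]}$ I would fix the decomposition of $G$ into cyclic factors, choose a generating set of $V$ adapted to the inversion orbits, and compute the rank of the associated system of quadratic coefficient conditions (equivalently, evaluate the cokernel of the norm map $u\mapsto uu^*$ for the involution $*$). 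The defect of this rank from its generic value is controlled by the squares in $G$ of order two, which is the source of the factor $\order{G^2[2]}$; substituting back then yields $\order{V_*(FG)}=\order{G^2[2]}\cdot q^{\frac12(\order G+\order{G_T})-1}$.
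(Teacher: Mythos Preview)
The paper does not prove this lemma at all: it is quoted verbatim as \cite[Theorem 2]{Bovdi_Szakacs_I} and used as a black box. So there is no ``paper's own proof'' to compare against; any argument you give is already more than what the paper supplies.

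Your setup is correct and standard: commutativity makes $\psi(u)=uu^*$ a homomorphism with kernel $V_*(FG)$, the image lands in the symmetric units $S$, and the dimension count $\dim_F(FG)^+=\tfrac12(\order G+\order{G_T})$ is right. The reduction to $\order{\im\psi}=q^{\frac12(\order G-\order{G_T})}/\order{G^2[2]}$ is also correct.

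The gap is that you do not actually carry out this last computation. Everything in your final paragraph is a plan (``an explicit calculation indicates'', ``I would fix the decomposition \ldots and compute the rank''), not an argument. The claimed norms ``of the shape $\widehat{G}+h$ with $h\in G^2[2]$'' are not exhibited, and more importantly you give no mechanism showing that $\im\psi$ has index \emph{exactly} $\order{G^2[2]}$ in whatever group you intend---you have only the crude sandwich $S^2\subseteq\im\psi\subseteq S$, and $[S:S^2]$ is in general much larger than $\order{G^2[2]}$. The actual proof in Bovdi--Szak\'acs proceeds by a careful induction on the cyclic decomposition of $G$, tracking how the unitary subgroup behaves under passage to $G\times C_{2^k}$; the factor $\order{G^2[2]}$ emerges from the cyclic direct factors of order at least $4$, each contributing a factor $2$. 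If you want a self-contained proof, that inductive route (or an equivalent explicit cokernel computation for $\psi$ done factor by factor) is what must replace your last paragraph.
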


\begin{lemma}(\cite[Corollary 2]{Bovdi_Rosa_I})\label{roza}
	Let ${F}$ be a finite field of characteristic two.
Then 
\begin{enumerate}
	\item[(i)] $|V_*(FG)|$ equals $|F|^{\frac{|G|+|G_T|}{2}-1}$ if $G$ is a dihedral $2$-group;
	\item[(ii)] $\order{V_*(FG)}$ equals $4\cdot \order{F}^{\frac{|G|+|G_T|}{2}-1}$ if $G$ is a generalized quaternion $2$-group.
\end{enumerate}
\end{lemma}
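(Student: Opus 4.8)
The plan is to count $V_*(FG)$ through the ``norm'' map $\lambda\colon V(FG)\to V(FG)$, $\lambda(u)=uu^*$. Writing $u=vw$ with $w=v^{-1}u$, one checks that $\lambda(u)=\lambda(v)$ if and only if $ww^*=1$, i.e.\ $w\in V_*(FG)$; hence the fibres of $\lambda$ are exactly the left cosets of $V_*(FG)$ and
\[
\order{V_*(FG)}=\frac{\order{V(FG)}}{\order{\im\lambda}}=\frac{\order{F}^{\order{G}-1}}{\order{\im\lambda}},
\]
using that $V(FG)=1+J(FG)$ has order $\order{F}^{\order{G}-1}$. Everything is thus reduced to computing the order of the image of $\lambda$, a set of symmetric normalized units. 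Comparing with the desired formulas, I must show $\order{\im\lambda}=\order{F}^{(\order{G}-\order{G_T})/2}$ in the dihedral case and $\tfrac14\order{F}^{(\order{G}-\order{G_T})/2}$ in the quaternion case.

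Next I would exploit the cyclic subgroup $A=\gp{a}$ of index two (for the standard presentations with $\order{G}=2^n$ and $\order{A}=2^{n-1}$), so that $FG=FA\oplus FAb$ and every unit is $u=\alpha+\beta b$ with $\alpha,\beta\in FA$ and $\varepsilon(\alpha)+\varepsilon(\beta)=1$, where $\varepsilon$ is the augmentation. On $FA$ the involution $*$ is the natural one ($a^i\mapsto a^{-i}$); on the coset it acts by $(\alpha b)^*=\alpha b$ in the dihedral case (each $a^ib$ is an involution) and by $(\alpha b)^*=z\alpha b$ in the quaternion case, where $z=a^{2^{n-2}}$ is the unique central involution. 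A direct computation, using $b\alpha^*=\alpha b$, $b\beta=\beta^* b$ and commutativity of $FA$, then gives
\[
uu^*=\alpha\alpha^*+\beta\beta^* \qquad\text{(dihedral)},
\]
because the coefficient of $b$ equals $\alpha\beta+\beta\alpha=2\alpha\beta=0$, while in the quaternion case the analogous computation leaves a surviving term,
\[
uu^*=\alpha\alpha^*+\beta\beta^*+\alpha\beta(1+z)b .
\]

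For the dihedral group the image of $\lambda$ therefore lies in the symmetric part of $FA$ and equals $\{\alpha\alpha^*+\beta\beta^*: \varepsilon(\alpha)+\varepsilon(\beta)=1\}$. I would determine its size from the behaviour of the one–variable norm $\gamma\mapsto\gamma\gamma^*$ on the commutative algebra $FA$ of the cyclic $2$-group $A$, whose fibres are the cosets of $V_*(FA)$; the order of $V_*(FA)$ is given by \lemref{szakacs}, and combining the two free parameters $\alpha,\beta$ with the single augmentation constraint should yield exactly $\order{\im\lambda}=\order{F}^{(\order{G}-\order{G_T})/2}$, hence $\order{V_*(FG)}=\order{F}^{(\order{G}+\order{G_T})/2-1}$.

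The quaternion case runs along the same lines, but the extra summand $\alpha\beta(1+z)b$ must be controlled. Since $(1+z)^2=1+z^2=0$, this term lives in the nilpotent ideal $(1+z)FA$, so it enlarges the image beyond $FA$ while simultaneously imposing coincidences among the pairs $(\alpha,\beta)$ that produce a given value of $uu^*$. Quantifying these coincidences — that is, measuring precisely how the $(1+z)$-twisted $b$-component shrinks $\order{\im\lambda}$ relative to the dihedral count — is the main obstacle, and it is exactly this defect that produces the factor $4$ by which $\order{V_*(FG)}$ then exceeds $\order{F}^{(\order{G}+\order{G_T})/2-1}$. I expect this to reduce to a finite linear–algebra computation over $F$ that is uniform in $n$, so that an induction on $n$ via the central quotient $G/\gp{z}$ (dihedral in both cases) closes the argument; setting up that induction cleanly, and isolating the origin of the constant $4$, is the delicate point.
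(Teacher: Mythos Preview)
The paper does not prove this lemma at all: it is quoted verbatim as \cite[Corollary~2]{Bovdi_Rosa_I} and used as a black box, so there is no ``paper's own proof'' to compare against.

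As for your proposal on its own merits: the framework is sound. The fibre identity $\order{V_*(FG)}=\order{V(FG)}/\order{\im\lambda}$ for $\lambda(u)=uu^*$ is correct, and your explicit formulas $uu^*=\alpha\alpha^*+\beta\beta^*$ in the dihedral case and $uu^*=\alpha\alpha^*+\beta\beta^*+\alpha\beta(1+z)b$ in the quaternion case are computed correctly. This is essentially the same map $\Phi(x)=xx^*$ that the paper exploits in \lemref{lemma_main}, so the strategy is entirely in the spirit of the surrounding arguments.

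The gap is that you never actually compute $\order{\im\lambda}$. In the dihedral case you say the count ``should yield'' the right power of $\order{F}$, but knowing $\order{V_*(FA)}$ via \lemref{szakacs} only gives you the fibre size of the one--variable norm on units of $FA$; it does not by itself determine the image of the two--variable map $(\alpha,\beta)\mapsto\alpha\alpha^*+\beta\beta^*$ on the affine hyperplane $\varepsilon(\alpha)+\varepsilon(\beta)=1$, and you have not supplied that argument. In the quaternion case you explicitly flag the crucial step --- measuring how the $(1+z)$-twisted $b$-component alters $\order{\im\lambda}$ and produces the factor~$4$ --- as ``the main obstacle'' and ``the delicate point'', and you leave it unresolved beyond the expectation that an induction through $G/\gp{z}$ should work. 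So what you have is a coherent plan with the correct preliminary computations, but the substantive counting that constitutes the lemma is not carried out.
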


Let $H$ be a normal subgroup of $G$ and $I(H)$ the ideal of $FG$ generated by the set $\{(1+h) \;|\; h\in H\}$. 
It is well-known that $FG/I(H)\cong F(G/H)$ and we denote by $\Psi$ the corresponding natural homomorphism. Let us denote by $V_*(F\overline{G})$ the unitary subgroup of the factor algebra $FG/I(H)$, where $\overline{G}=G/H$. It is clear that the set 
\[N^*_{\Psi}=\{x\in V(FG) \;|\; \Psi(x) \in V_*(F\overline{G}) \}\] 
forms a subgroup in $V(FG)$. Furthermore, the set $I(H)^+=\{1+x \, |\, x\in I(H) \}$ forms a normal subgroup in $V(FG)$. We define $S_H$ to be the group generated by the elements $\{ xx^* \;|\; x\in N^*_{\Psi} \}$. It is obvious that $S_H$ is a subgroup of $I(H)^+$. Indeed, $xx^* \in 1+\ker(\Psi)=I(H)^+$.

\begin{lemma}\label{lemma_main}
	Let $G$ be a finite $2$-group, $H$ its normal subgroup of order two and $F$ a finite field of characteristic two. If $S_H$ is central in $N^*_{\Psi}$, then the order of $V_*(FG)$ is equal to $\order{F}^{\frac{\order{G}}{2}}\cdot \frac{|V_*(F\overline{G})|}{|S_H|}$.
\end{lemma}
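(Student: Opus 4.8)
The plan is to realise both $V_*(FG)$ and $S_H$ as the kernel and image of a single group homomorphism on $N^*_{\Psi}$, and then to compute $\order{N^*_{\Psi}}$ independently through the reduction map $\Psi$. I would first record two elementary facts. Since $*$ sends $g\mapsto g^{-1}$ and $\Psi$ sends $g\mapsto \overline{g}$, the two operations commute, that is $\Psi(x^*)=\Psi(x)^*$ for every $x\in FG$. Consequently, if $u\in V_*(FG)$ then $\Psi(u)^*=\Psi(u^*)=\Psi(u^{-1})=\Psi(u)^{-1}$, so $\Psi(u)\in V_*(F\overline{G})$ and hence $V_*(FG)\subseteq N^*_{\Psi}$. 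Likewise $I(H)^+\subseteq N^*_{\Psi}$, because $\Psi(I(H)^+)=\{1\}\subseteq V_*(F\overline{G})$; in particular $S_H\subseteq I(H)^+\subseteq N^*_{\Psi}$.

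Next I would introduce the map $\phi\colon N^*_{\Psi}\to S_H$ given by $\phi(x)=xx^*$. This is well defined: $\Psi(xx^*)=\Psi(x)\Psi(x)^*=1$ shows $xx^*\in I(H)^+$, and by construction $xx^*$ is one of the generators of $S_H$. The crucial step is to prove that $\phi$ is a group homomorphism, and this is exactly where the centrality hypothesis enters. For $x,y\in N^*_{\Psi}$ one computes $\phi(xy)=(xy)(xy)^*=x(yy^*)x^*$. Since $yy^*\in S_H$ is central in $N^*_{\Psi}$ it commutes with $x$, so $x(yy^*)x^*=(yy^*)(xx^*)$; and because $S_H$ is central in $N^*_{\Psi}$ and contained in it, $S_H$ is abelian, whence $(yy^*)(xx^*)=(xx^*)(yy^*)=\phi(x)\phi(y)$. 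This rearrangement is the \emph{main obstacle}: it fails for a general normal subgroup of order two and relies essentially on the assumption that $S_H$ lies in the centre of $N^*_{\Psi}$.

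Granting that $\phi$ is a homomorphism, its kernel is $\{x\in N^*_{\Psi}\;|\;xx^*=1\}=\{x\in N^*_{\Psi}\;|\;x^*=x^{-1}\}=V_*(FG)$, where the last equality uses $V_*(FG)\subseteq N^*_{\Psi}$; and its image is the subgroup generated by all $xx^*$, which is $S_H$ by definition. The first isomorphism theorem then yields $N^*_{\Psi}/V_*(FG)\cong S_H$, so that $\order{N^*_{\Psi}}=\order{V_*(FG)}\cdot\order{S_H}$.

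It remains to evaluate $\order{N^*_{\Psi}}$. Here I would use that $\Psi$ restricts to a surjective homomorphism $V(FG)\to V(F\overline{G})$ with kernel $I(H)^+$, the surjectivity following from the standard counts $\order{V(FG)}=\order{F}^{\order{G}-1}$ and $\order{V(F\overline{G})}=\order{F}^{\order{G}/2-1}$. Since $I(H)=\ker(\Psi)$ has $F$-dimension $\order{G}-\order{\overline{G}}=\tfrac{1}{2}\order{G}$, the bijection $x\mapsto 1+x$ gives $\order{I(H)^+}=\order{F}^{\order{G}/2}$. As $N^*_{\Psi}$ is precisely the full preimage $\Psi^{-1}(V_*(F\overline{G}))$ inside $V(FG)$, we obtain $N^*_{\Psi}/I(H)^+\cong V_*(F\overline{G})$, hence $\order{N^*_{\Psi}}=\order{F}^{\order{G}/2}\cdot\order{V_*(F\overline{G})}$. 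Substituting this into $\order{V_*(FG)}=\order{N^*_{\Psi}}/\order{S_H}$ gives the asserted formula $\order{V_*(FG)}=\order{F}^{\order{G}/2}\cdot\frac{\order{V_*(F\overline{G})}}{\order{S_H}}$.
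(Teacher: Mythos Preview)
Your proof is correct and follows essentially the same route as the paper: define $\phi(x)=xx^*$ on $N^*_{\Psi}$, use centrality of $S_H$ to check it is a homomorphism with kernel $V_*(FG)$ and image $S_H$, and then compute $\order{N^*_{\Psi}}=\order{I(H)^+}\cdot\order{V_*(F\overline{G})}$ via the quotient map $\Psi$. You in fact spell out several points the paper leaves implicit (that $\Psi$ and $*$ commute, that $V_*(FG)\subseteq N^*_{\Psi}$, and the explicit verification that $\phi$ is multiplicative), so your argument is a more detailed rendering of the same proof.
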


\begin{proof}
	Let $\Phi$ be the mapping defined by $\Phi(x)=xx^*$ for every $x\in V(FG)$. It is easily seen that $\Phi$ is generally not a group homomorphism on $V(FG)$. 
	However, $S_H$ being central in $N^*_{\Psi}$ implies that the restriction $\Phi|_{N^*_{\Psi}}$ is a homomorphism. 
	
	According to the homomorphism theorem $N^*_{\Psi}/\ker(\Phi|_{N^*_{\Psi}})\cong S_H$. Since $\ker(\Phi|_{N^*_{\Psi}})=V_*(FG)$ we have
	\[
	|V_*(FG)|=\frac{|N^*_{\Psi}|}{|S_H|}=\frac{ |I(H)^+|\cdot |V_*(F\overline{G})| }{|S_H|}.
	\]
	
	Evidently, $I(H)$ can be considered as a vector space over $F$ with basis $\{ u(1+h) \;|\; u \in T(G/H),\; h \in H \}$, where $T(G/H)$ is a complete set of left coset representatives of $H$ in $G$. Thus we have that
	$|I(H)^+|=|F|^{\frac{\order{G}}{2}}$ and 
	\[
	|V_*(FG)|=\order{F}^{\frac{\order{G}}{2}}\cdot \frac{|V_*(F\overline{G})|}{|S_H|}.
	\]
	
\end{proof}

	Let $C$ be a central subgroup of a $2$-group $G$, $F$ a finite field of characteristic two and $g_1,\cdots,g_n \in G$ for some $n$. We denote by $V_{g_1,\cdots,g_n}$ the vector space in $FG$ over $F$ spanned by the elements $g_i\widehat{C}$. Let $G_{g_1,\cdots,g_n}$ denote the group generated by the elements $1+\alpha g_i\widehat{C}$, $\alpha \in F$.

\begin{lemma}\label{lemma_6}
	The set $1+V_{g_1,\cdots,g_n}$ coincides with  $G_{g_1,\cdots,g_n}$.
	
\end{lemma}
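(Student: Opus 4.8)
The plan is to prove both inclusions, and the crucial preliminary observation is that $\widehat{C}$ is a central, square-zero element of $FG$. First I would record that, because $C$ is central in $G$, for every $g \in G$ we have $g\widehat{C}g^{-1}=\sum_{c\in C}gcg^{-1}=\widehat{C}$, so $\widehat{C}$ commutes with every group element and hence is central in $FG$. Moreover $\widehat{C}^2=\sum_{c,d\in C}cd=\order{C}\,\widehat{C}$, and since $C$ is a nontrivial $2$-group and $\ch F=2$ the coefficient $\order{C}$ vanishes; thus $\widehat{C}^2=0$.

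The second step is to use these two facts to linearise all products. For any $v,w\in V_{g_1,\cdots,g_n}$, writing $v=\sum_i\alpha_i g_i\widehat{C}$ and $w=\sum_j\beta_j g_j\widehat{C}$, the product $vw=\sum_{i,j}\alpha_i\beta_j g_ig_j\widehat{C}^2=0$ because $\widehat{C}$ is central and squares to zero. Consequently $(1+v)(1+w)=1+v+w$, which lies in $1+V_{g_1,\cdots,g_n}$ since the vector space $V_{g_1,\cdots,g_n}$ is closed under addition; taking $w=v$ gives $(1+v)^2=1$. This shows that $1+V_{g_1,\cdots,g_n}$ is an elementary abelian subgroup of $V(FG)$, with group multiplication mirroring vector addition.

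From here both inclusions are immediate. Each generator $1+\alpha g_i\widehat{C}$ of $G_{g_1,\cdots,g_n}$ manifestly lies in $1+V_{g_1,\cdots,g_n}$, and since the latter is a group the whole subgroup it generates is contained in it, giving $G_{g_1,\cdots,g_n}\subseteq 1+V_{g_1,\cdots,g_n}$. Conversely, given an arbitrary element $1+\sum_i\alpha_i g_i\widehat{C}$ of $1+V_{g_1,\cdots,g_n}$, the linearisation identity from the previous step yields $\prod_{i=1}^n\bigl(1+\alpha_i g_i\widehat{C}\bigr)=1+\sum_i\alpha_i g_i\widehat{C}$, exhibiting it as a product of generators and hence as an element of $G_{g_1,\cdots,g_n}$. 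The two sets therefore coincide.

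I do not expect any serious obstacle: the entire argument rests on the single computation $\widehat{C}^2=0$, after which the multiplication on $1+V_{g_1,\cdots,g_n}$ collapses to addition in $V_{g_1,\cdots,g_n}$. The only point requiring a word of care is the standing assumption that $C$ is nontrivial, which is what forces $\order{C}$ to be even and hence $\widehat{C}^2=0$ in characteristic two.
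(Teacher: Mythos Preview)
Your proof is correct and follows exactly the same idea as the paper: both hinge on the identity $(1+x_1\widehat{C})(1+x_2\widehat{C})=1+(x_1+x_2)\widehat{C}$, which comes from centrality of $\widehat{C}$ and $\widehat{C}^2=0$. You simply make explicit the details (why $\widehat{C}$ is central, why $\widehat{C}^2=0$, and the two inclusions) that the paper leaves to the reader.
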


\begin{proof}
	Let $x_1,x_2\in FG$ be.	The identity $1+(x_1+x_2)\widehat{C}=1+x_1\widehat{C}+x_2\widehat{C}=(1+x_1\widehat{C})(1+x_2\widehat{C})$ proves the lemma.
	
\end{proof}

\begin{lemma}\label{lemma_3}
	Let $G$ be a $2$-group and $F$ a finite field of characteristic two. Then $\supp(xx^*)\cap G_T=\{1\}$ for every $x \in V(FG)$.
\end{lemma}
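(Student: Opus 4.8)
The plan is to work directly with coefficients. Writing $x=\sum_{g\in G}a_g g$ with $a_g\in F$, the classical involution gives $x^*=\sum_{g\in G}a_g g^{-1}$, so the coefficient of a fixed group element $t$ in the product $xx^*$ is obtained by collecting all pairs $(g,h)$ with $gh^{-1}=t$, namely $h=t^{-1}g$. Thus the coefficient of $t$ equals $\sum_{g\in G}a_g\,a_{t^{-1}g}$. The whole statement then reduces to evaluating this sum in the two relevant cases: $t=1$, and $t\in G_T$ with $t\neq 1$.

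First I would treat an involution $t\neq 1$, i.e.\ $t^2=1$ and $t\neq 1$. Here $t^{-1}=t$, so the coefficient of $t$ in $xx^*$ is $\sum_{g\in G}a_g\,a_{tg}$. The key observation is that the map $\sigma\colon g\mapsto tg$ is a permutation of $G$ with $\sigma^2(g)=t^2g=g$, hence an involution of the underlying set, and it is fixed-point-free because $tg=g$ would force $t=1$. Consequently $G$ splits into two-element orbits $\{g,tg\}$. Pairing the summands accordingly, each orbit contributes $a_g a_{tg}+a_{tg}a_{g}=2a_g a_{tg}=0$ since $\ch(F)=2$. Therefore the coefficient of every $t\in G_T\setminus\{1\}$ vanishes, so no such $t$ lies in $\supp(xx^*)$.

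It then remains to verify that $1\in\supp(xx^*)$. Taking $t=1$, the coefficient of the identity is $\sum_{g\in G}a_g^2=\bigl(\sum_{g\in G}a_g\bigr)^2$, using additivity of the Frobenius map in characteristic two. Since $x\in V(FG)$ is a normalized unit, its augmentation equals $1$, so this coefficient is $1^2=1\neq 0$. Combining the two cases yields $\supp(xx^*)\cap G_T=\{1\}$, as claimed.

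I do not expect a genuine obstacle here; the argument is an orbit-pairing computation. The only points requiring care are verifying that $g\mapsto t^{-1}g$ is indeed a fixed-point-free involution on $G$ for $t\in G_T\setminus\{1\}$ (which is exactly where $t^2=1$ and $t\neq1$ are both used), and correctly invoking the identity $\sum a_g^2=(\sum a_g)^2$ together with the normalization $\varepsilon(x)=1$ to guarantee that the identity element is not lost from the support. Both are routine in characteristic two.
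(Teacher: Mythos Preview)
Your proof is correct and follows essentially the same approach as the paper. The paper writes $xx^*=1+\sum_{i<j}\alpha_i\alpha_j\bigl(g_ig_j^{-1}+(g_ig_j^{-1})^{-1}\bigr)$ by pairing the index pairs $(i,j)$ and $(j,i)$, and then notes that $h+h^{-1}=0$ when $h\in G_T$; your orbit pairing $g\leftrightarrow tg$ for a fixed $t\in G_T\setminus\{1\}$ is precisely this same cancellation viewed coefficient-by-coefficient, and your treatment of the identity coefficient via the Frobenius identity matches the paper's diagonal contribution.
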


\begin{proof}
	Let $x=\sum_{i=1}^{\order{G}}\alpha_i g_i \in V(FG)$. Then \[xx^*=\Big(\sum_{i=1}^{\order{G}}\alpha_i g_i\Big)\Big(\sum_{j=1}^{\order{G}}\alpha_j g_j^{-1}\Big)=1+\sum_{1\leq i<j \leq \order{G}} \alpha_i\alpha_j \big(g_ig_j^{-1}+(g_ig_j^{-1})^{-1}\big).\]
	
	Assume that $g_ig_j^{-1}\in G_T$ for some $i$ and $j$. Then $(g_ig_j^{-1}+(g_ig_j^{-1})^{-1})=0$, which proves the lemma. 
	
\end{proof}

Here $H$ denotes a central subgroup of order two in $G$ generated by $c\in G$. Let $M$ be the set $\{g \in G \;|\; g^2=c \}$.

\begin{lemma}\label{lemma_4}
	Let $G$ be a $2$-group and $F$ a finite field of characteristic two. If $1+g\widehat{H} \in S_H$ for some $g\in G$, then $g^2=c$.
\end{lemma}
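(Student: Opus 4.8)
The plan is to exploit the fact that, because $H=\gp{c}$ has order two with $c$ central, the ideal $I(H)$ squares to zero, which collapses the multiplicative structure of $S_H$ into something purely additive and hence easy to control. Concretely, since $\widehat{H}=1+c$ is central and $\widehat{H}^2=(1+c)^2=1+c^2=0$ in characteristic two, we have $I(H)=FG\widehat{H}$ and $I(H)^2=0$. Thus for $u=1+a$ and $v=1+b$ with $a,b\in I(H)$ one gets $uv=1+a+b$, so the map $1+a\mapsto a$ is a group isomorphism from $I(H)^+$ onto the additive group $(I(H),+)$. In particular $I(H)^+$, and with it $S_H$, is an elementary abelian $2$-group in which multiplying elements just means adding their non-identity parts.

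With this in hand I would promote two properties of the generators of $S_H$ to the whole subgroup. Each generator $xx^*$ is symmetric, since $(xx^*)^*=(x^*)^*x^*=xx^*$, and by \lemref{lemma_3} it satisfies $\supp(xx^*)\cap G_T=\{1\}$; equivalently, writing $xx^*=1+a$, the element $a\in I(H)$ is fixed by $*$ and has $\supp(a)\cap G_T=\emptyset$. Both conditions are additive (indeed $F$-linear) in $a$, so under the isomorphism above they are stable under the group operation of $S_H$. Therefore \emph{every} element of $S_H$ can be written as $1+a$ with $a^*=a$ and $\supp(a)\cap G_T=\emptyset$.

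Finally I would apply this to the hypothesis $1+g\widehat{H}\in S_H$, where now $a=g\widehat{H}=g+gc$ (and $a\neq 0$ because $c\neq 1$). The support condition forces $g\notin G_T$ and $gc\notin G_T$, so in particular $g^2\neq 1$. The symmetry condition gives $g\widehat{H}=(g\widehat{H})^*=g^{-1}\widehat{H}$, using that $c$ is central and $c^{-1}=c$; since $g\widehat{H}=g'\widehat{H}$ holds exactly when $gH=g'H$, this says $g^2\in H=\{1,c\}$. Combining $g^2\neq 1$ with $g^2\in\{1,c\}$ yields $g^2=c$, as required. The one genuinely delicate point is the passage from the generators to all of $S_H$: a priori a product of symmetric elements need not be symmetric, and \lemref{lemma_3} is stated only for single elements $xx^*$. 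The observation $I(H)^2=0$ is precisely what removes this obstacle, turning each product into a sum and making both properties manifestly additive, hence inherited by the whole group.
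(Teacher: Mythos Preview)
Your proof is correct and follows essentially the same route as the paper: use that elements of $S_H$ are $*$-symmetric to obtain $g\widehat{H}=g^{-1}\widehat{H}$, hence $g^2\in H$, and then invoke \lemref{lemma_3} to exclude $g^2=1$. The paper simply asserts that ``$S_H$ contains only $*$-symmetric elements'' and cites \lemref{lemma_3} without further comment; you make explicit, via $I(H)^2=0$, why both the symmetry and the support condition pass from the generators $xx^*$ to arbitrary elements of $S_H$, which is a genuine clarification rather than a different argument.
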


\begin{proof}
	Assume that $1+g\widehat{H} \in S_H$ for some $g\in G$. Since $S_H$ contains only $*$-symmetric elements $1+g\widehat{H}=1+g^{-1}\widehat{H}$.
	Therefore $(g+g^{-1})\widehat{H}=g+gc+g^{-1}+g^{-1}c=0$. If $g=g^{-1}$, then $\order{g}=2$, which is impossible by Lemma \ref{lemma_3}.
	Thus, $g=g^{-1}c$ and $g^2=c$.
	
\end{proof}

\begin{corollary}\label{corrolary_1}
	Let $G$ be a $2$-group, $H$ its central subgroup of order two and $F$ a finite field of characteristic two. Then $S_H$ can be generated by the elements of the form $1+\alpha_g g\widehat{H}$, where $g\in M$ and $1+\beta_h (h+h^{-1})\widehat{H}$, where $h \not\in M$ $(\alpha_g,\beta_h \in F)$.
\end{corollary}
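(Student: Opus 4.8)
The plan is to realize $S_H$ inside the elementary abelian group $I(H)^+$ and to read off its generators from a basis of the $*$-symmetric part of $I(H)$, discarding the ``wrong'' basis directions by means of \lemref{lemma_3}.

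First I would record the structural facts about $I(H)$. Since $\ch F=2$ and $\widehat H=1+c$ with $c^2=1$, one has $\widehat H^2=1+c^2=0$, and because $\widehat H$ is central this gives $I(H)^2=(FG\widehat H)^2=0$. Consequently $(1+x)(1+y)=1+x+y$ for all $x,y\in I(H)$, so $I(H)^+$ is an elementary abelian $2$-group canonically isomorphic to the additive group of $I(H)$, and $1+V$ is a subgroup for every $F$-subspace $V\subseteq I(H)$. As already noted, each generator $xx^*$ of $S_H$ lies in $I(H)^+$, and it is $*$-symmetric since $(xx^*)^*=xx^*$; hence $xx^*=1+y$ with $y\in I(H)$ and $y^*=y$.

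Next I would describe the $*$-symmetric subspace of $I(H)$. Using $g\widehat H=gc\,\widehat H$ and $(g\widehat H)^*=g^{-1}\widehat H$, I would sort the cosets $gH=\{g,gc\}$ into three types according to the action of $*$: those with $g\in M$ (here $g^{-1}=gc$, so $g\widehat H$ is itself symmetric), those with $g\in G_T$ (again $g\widehat H$ is symmetric), and the remaining cosets, which are genuinely moved by $*$ and pair off as $\{gH,g^{-1}H\}$, contributing the symmetric vector $(g+g^{-1})\widehat H$. One checks easily that $M$, $G_T$ and their complement are each unions of cosets, and that inversion is a fixed-point-free pairing on the complement. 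This yields an explicit $F$-basis of the symmetric subspace consisting of the vectors $g\widehat H$ $(g\in M)$, $g\widehat H$ $(g\in G_T)$, and $(g+g^{-1})\widehat H$ $(g\notin M\cup G_T)$, so that $y$ is a unique $F$-combination of these.

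The decisive step is to eliminate the $G_T$-vectors. For a coset $gH$ with $g\in G_T$, both $g$ and $gc$ are involutions; if the coset is not $\{1,c\}$ then $g,gc\in G_T\setminus\{1\}$, while for the coset $\{1,c\}$ the nontrivial involution is $c$. In either case \lemref{lemma_3} forces the coefficient of the corresponding basis vector in $xx^*$ to vanish, since those group elements would otherwise lie in $\supp(xx^*)\cap G_T$. Hence $y$ lies in the subspace $W$ spanned only by the vectors $g\widehat H$ $(g\in M)$ and $(g+g^{-1})\widehat H$ $(g\notin M)$. Finally, expanding $1+y$ by the multiplicativity $1+(x_1+x_2)\widehat H=(1+x_1\widehat H)(1+x_2\widehat H)$ of \lemref{lemma_6}, I factor each generator $xx^*$ as a product of elements $1+\alpha_g g\widehat H$ with $g\in M$ and $1+\beta_h(h+h^{-1})\widehat H$ with $h\notin M$; since every generator of $S_H$ factors this way, these elements generate $S_H$. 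I expect the main obstacle to be the bookkeeping of this middle step: setting up the three-type classification consistently and then invoking \lemref{lemma_3} with the correct care at the identity coset $\{1,c\}$, where it is $c$ rather than $1$ that must be excluded from the support.
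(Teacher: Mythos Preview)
Your proposal is correct and follows essentially the same approach as the paper: both arguments use that $S_H\subseteq I(H)^+$, that its elements are $*$-symmetric, that \lemref{lemma_3} eliminates contributions from $G_T$, and then the multiplicativity identity $(1+x_1\widehat H)(1+x_2\widehat H)=1+(x_1+x_2)\widehat H$ to factor. Your write-up is considerably more explicit than the paper's---you set up the three-type coset classification and handle the identity coset $\{1,c\}$ carefully---whereas the paper compresses all of this into a single sentence citing Lemmas~\ref{lemma_3} and~\ref{lemma_4}; your direct use of \lemref{lemma_3} makes the separate invocation of \lemref{lemma_4} unnecessary.
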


\begin{proof}
	Obviously, $gh+(gh)^{-1}=gh(1+(gh)^{-2})$.
	We have already proved that $S_H \subseteq I(H)^+$ and $S_H$ contains only $*$-symmetric elements.
	Therefore every $x \in S_H$ can be expressed in the following form
	\[
	x=1+\sum_{g\in M} \alpha_g g\widehat{H} + \sum_{h\not\in M} \beta_h (h+h^{-1})\widehat{H}
	\]
	by Lemma \ref{lemma_3} and \ref{lemma_4}.
	Since $(1+x_1\widehat{H})(1+x_2\widehat{H})=1+(x_1+x_2)\widehat{H}$ for every $x_1,x_2\in FG$ the corollary is done.
\end{proof}

\begin{lemma}\label{lemma_5}
	Let $G$ be a $2$-group, $H$ its central subgroup of order two and $F$ a finite field of characteristic two. Let $g \in G$ be such that $g^2\not\in H$. Then $1+\alpha (g+g^{-1})\widehat{H} \in S_H$.
\end{lemma}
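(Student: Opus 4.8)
The plan is to realise the target element directly as a ``norm'' $xx^{*}$ of a single explicit unit, so that membership in $S_{H}$ follows immediately from the definition of $S_{H}$ as the group generated by $\{xx^{*}\mid x\in N^{*}_{\Psi}\}$. The natural candidate is
\[
u=1+\alpha g\widehat{H}.
\]
The whole argument then reduces to two checks: that $u$ actually lies in $N^{*}_{\Psi}$, and that $uu^{*}$ is precisely $1+\alpha(g+g^{-1})\widehat{H}$.

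First I would verify that $u$ is a normalized unit. Because $H$ is central and $F$ has characteristic two with $c^{2}=1$, we have $\widehat{H}^{2}=(1+c)^{2}=0$, hence $(g\widehat{H})^{2}=g^{2}\widehat{H}^{2}=0$. Thus $u^{2}=1$, so $u$ is a (self-inverse) unit, and its augmentation equals $1$ since $\widehat{H}$ has augmentation $1+1=0$ in $F$. To see that $u\in N^{*}_{\Psi}$ I would apply $\Psi$: the homomorphism $\Psi$ collapses $H$, so $\Psi(\widehat{H})=1+1=0$ and therefore $\Psi(u)=1$, which is trivially unitary. Hence $\Psi(u)\in V_{*}(F\overline{G})$ and $u\in N^{*}_{\Psi}$.

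Then I would compute $uu^{*}$. Since $*$ is an anti-automorphism with $\widehat{H}^{*}=\widehat{H}$ (as $c^{-1}=c$) and $g^{*}=g^{-1}$, one obtains $u^{*}=1+\alpha g^{-1}\widehat{H}$. Multiplying out and using centrality once more, the cross term vanishes, $\alpha^{2}g\widehat{H}g^{-1}\widehat{H}=\alpha^{2}\widehat{H}^{2}=0$, so
\[
uu^{*}=1+\alpha(g+g^{-1})\widehat{H},
\]
which is exactly the claimed element; by construction it lies in $S_{H}$.

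I do not anticipate a real obstacle: the computation is short and self-contained, and the only delicate point is the verification that $u\in N^{*}_{\Psi}$, which is what legitimises taking $uu^{*}$ as a generator of $S_{H}$. This step works precisely because $\Psi$ annihilates $\widehat{H}$, forcing $\Psi(u)=1$. The hypothesis $g^{2}\notin H$ is not used in the algebra itself but isolates the only nontrivial regime: if $g^{2}=1$ then $g+g^{-1}=0$, and if $g^{2}=c$ (that is, $g\in M$) then $g^{-1}=gc$ and $(g+g^{-1})\widehat{H}=g\widehat{H}^{2}=0$, so in both excluded cases the element collapses to $1$.
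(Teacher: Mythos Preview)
Your proposal is correct and follows exactly the paper's approach: set $u=1+\alpha g\widehat{H}$, note that $\Psi(u)=1$ so $u\in N^{*}_{\Psi}$, and compute $uu^{*}=1+\alpha(g+g^{-1})\widehat{H}$ using $\widehat{H}^{2}=0$. Your write-up is in fact more careful than the paper's, which omits the verification that $u$ is a unit and the remark about where the hypothesis $g^{2}\notin H$ actually enters.
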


\begin{proof}
	Suppose that $g \in G$ and $g^2\not\in H$. Since $g\ne g^{-1}$ and $1+\alpha g\widehat{H} \in \ker(\Psi)$ we have
	\[
	\begin{split}
	(1+\alpha g\widehat{H})(1+\alpha g\widehat{H})^*&=1+\alpha (g+g^{-1})\widehat{H} \\
	\end{split}
	\]
	for every $\alpha \in F$ which proves the lemma.	
	
\end{proof}

Set $G_P=\{g^2 \;|\; g \in G \}$.
Let $\Theta$ denote the set of all groups with the property that $g^h=g$ or $g^h=g^{-1}$ for all $g\in G\setminus G_T$ and $h\in G$. It is clear that every abelian group belongs to $\Theta$. One can check that the dihedral and generalized quaternion groups belong to $\Theta$.

\begin{theorem}\label{theorem_main}
	Let $G$ be a finite $2$-group and $c\in \zeta(G)[2]\setminus G_P$, $C=\gp{c}$, $\overline{G}\cong G/C$ and $F$ a finite field of characteristic two. If $\overline{G}\in \Theta$, then 
	\[
	\order{V_*({F}G}=\order{F}^{\frac{\order{G}+\order{G_T}}{4}}
	\cdot \order{V_*({F}\overline{G})}.
	\] 
\end{theorem}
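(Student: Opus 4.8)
The plan is to apply \lemref{lemma_main} with $H=C=\gp{c}$, which is a central subgroup of order two since $c\in\zeta(G)[2]$. Once we know that $S_C$ is central in $N^*_\Psi$, that lemma gives $\order{V_*(FG)}=\order{F}^{\order{G}/2}\cdot\order{V_*(F\overline{G})}/\order{S_C}$. Comparing with the desired formula, the whole theorem reduces to two claims: (a) $S_C$ is central in $N^*_\Psi$, and (b) $\order{S_C}=\order{F}^{(\order{G}-\order{G_T})/4}$; indeed, granting these, $\order{F}^{\order{G}/2}/\order{S_C}=\order{F}^{(\order{G}+\order{G_T})/4}$, which is exactly the claimed exponent.

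I would first pin down $S_C$ exactly. The hypothesis $c\notin G_P$ means that no element squares to $c$, so the set $M=\{g\in G\,|\,g^2=c\}$ is empty. Hence in Corollary~\ref{corrolary_1} the generators of the first type disappear, and every element of $S_C$ lies in $1+W$, where $W$ is the $F$-span of $\{(g+g^{-1})\widehat{C}\,|\,g\in G\setminus G_T\}$ (the terms with $g\in G_T$ vanish). Conversely, for $g\notin G_T$ we have $g^2\neq1$ and, since $M=\emptyset$, also $g^2\neq c$, so $g^2\notin C$; thus \lemref{lemma_5} places $1+\alpha(g+g^{-1})\widehat{C}$ in $S_C$ for every $\alpha\in F$. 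By \lemref{lemma_6} these elements generate all of $1+W$, so $S_C=1+W$ and $\order{S_C}=\order{W}=\order{F}^{\dim_F W}$.

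For the dimension count I would work in the ideal $I(C)$, which has the $F$-basis $\{g\widehat{C}\}$ indexed by the $\order{G}/2$ cosets of $C$, on which $*$ acts by $g\widehat{C}\mapsto g^{-1}\widehat{C}$, i.e.\ by sending the coset $gC$ to $g^{-1}C$. A coset is fixed precisely when $g^2\in C$, and since $M=\emptyset$ this happens exactly for $g\in G_T$; as $G_T$ is a union of $C$-cosets there are $\order{G_T}/2$ fixed cosets. The remaining $(\order{G}-\order{G_T})/2$ cosets are permuted in $*$-orbits of length two, and each orbit $\{gC,g^{-1}C\}$ contributes the single linearly independent vector $(g+g^{-1})\widehat{C}$ to $W$. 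Hence $\dim_F W$ equals the number of these orbits, namely $(\order{G}-\order{G_T})/4$, which proves claim (b).

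The delicate point, and where the assumption $\overline{G}\in\Theta$ is used, is claim (a). It suffices to show that each generator $1+\alpha(g+g^{-1})\widehat{C}$ commutes with every $h\in G$, since $G$ spans $FG$. Conjugating and using that $\widehat{C}$ is central, this amounts to the identity $(g^h+(g^h)^{-1})\widehat{C}=(g+g^{-1})\widehat{C}$. For $g\notin G_T$ we have $g^2\notin C$, so the image $\overline{g}$ satisfies $\overline{g}\notin\overline{G}_T$, and the $\Theta$ condition forces $\overline{g}^{\,\overline{h}}=\overline{g}$ or $\overline{g}^{\,\overline{h}}=\overline{g}^{-1}$; in coset terms, $g^h\in gC$ or $g^h\in g^{-1}C$. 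In either case $g^h\widehat{C}$ and $(g^h)^{-1}\widehat{C}$ coincide with $g\widehat{C}$ and $g^{-1}\widehat{C}$ in some order, so the identity holds and $S_C$ is central in $V(FG)$, a fortiori in $N^*_\Psi$. Verifying this commutation carefully is the main obstacle; once it is in place, \lemref{lemma_main} together with claim (b) yields the stated order.
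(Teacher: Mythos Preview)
Your proposal is correct and follows the same approach as the paper: identify $S_C$ as the set $1+W$ generated by the elements $1+\alpha(g+g^{-1})\widehat{C}$ (using $M=\emptyset$ together with Corollary~\ref{corrolary_1}, \lemref{lemma_5}, and \lemref{lemma_6}), verify centrality of $S_C$ via the $\Theta$ hypothesis on $\overline{G}$, and then apply \lemref{lemma_main}. Your dimension count for $W$ and your verification that $\overline{g}\notin\overline{G}_T$ when $g\notin G_T$ spell out details that the paper's proof leaves implicit, but the argument is the same.
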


\begin{proof}
	Assume that $c\in \zeta(G)[2]\setminus G_P$. Then $\widehat{C}=1+c\ne 0$ and $S_C$ is a central elementary abelian group generated by the set $\{1+\alpha (g+g^{-1})\widehat{C}\;|\; \alpha \in F,\;g\in G\setminus G_T \}$ by Lemma \ref{lemma_6}, \ref{lemma_3} and \ref{lemma_5}. 
	Furthermore, $h^{-1}(g+g^{-1})\widehat{C}h=(g+g^{-1})\widehat{C}$ for all $h\in G$, because $\overline{G}\in \Theta$. 
	
	According to Lemma \ref{lemma_main}
	\[
	\order{V_*({F}G}=
	\order{F}^{\frac{\order{G}}{2}}\cdot \frac{|V_*(F\overline{G})|}{\order{F}^{\frac{\order{G}-\order{G_T}}{4}}}=\order{F}^{\frac{\order{G}+\order{G_T}}{4}}
	\cdot \order{V_*({F}\overline{G}}.
	\]
		
\end{proof}

Let us consider some consequences of Theorem \ref{theorem_main}.

\begin{corollary}\label{corollary_every}
	Let $H$ be a group satisfying $|V_*(FH)|=n \cdot \order{F}^{\frac{|H|+|H_T|}{2}-1}$ for some $n$ and $H$ belongs to $\Theta$. Let $G=H\times E$, where $E$ is a finite elementary abelian $2$-group,  and $F$ a finite field of characteristic two.
	Then $\order{V_*(FG)}=n\cdot \order{F}^{\frac{1}{2}(|G|+|G_T|)-1}$.
\end{corollary}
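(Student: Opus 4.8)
The plan is to induct on the order of $E$, peeling off one cyclic direct factor of order two at each stage and invoking Theorem~\ref{theorem_main}. When $E$ is trivial the statement is precisely the hypothesis on $H$, so this is the base case. Assuming $E$ nontrivial, I would write $E=\gp{c}\times E'$, where $\order{c}=2$ and $E'$ is elementary abelian with $\order{E'}=\tfrac12\order{E}$, and apply the theorem to reduce $G=H\times E$ to $H\times E'$.

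First I would check that $c$ meets the hypotheses of Theorem~\ref{theorem_main} inside $G=H\times E$. Since $E$ is central in $G$ and $c^2=1$, we have $c\in\zeta(G)[2]$. To see $c\notin G_P$, note that every square in $G$ has the form $(h^2,e^2)=(h^2,1)$ because $E$ has exponent two; hence $G_P\subseteq H_P\times\{1\}$, and the nontrivial $E$-component of $c$ shows $c$ is not a square. Finally $\overline{G}=G/\gp{c}\cong H\times E'$, so I must verify $\overline{G}\in\Theta$. This is the one genuine structural point: I would prove that $\Theta$ is closed under taking direct products with elementary abelian $2$-groups. Conjugation in $H\times E'$ acts trivially on the second coordinate, and an element $(a,b)$ lies outside $(\overline{G})_T$ exactly when $a\notin H_T$; for such $a$ the condition $a^h\in\{a,a^{-1}\}$ holds because $H\in\Theta$, and since $b=b^{-1}$ this gives $(a,b)^{(h,e)}\in\{(a,b),(a,b)^{-1}\}$. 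Thus $\overline{G}\in\Theta$.

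With the hypotheses in place, Theorem~\ref{theorem_main} yields $\order{V_*(FG)}=\order{F}^{(\order{G}+\order{G_T})/4}\cdot\order{V_*(F\overline{G})}$, and the induction hypothesis applied to $\overline{G}\cong H\times E'$ (same $H$, same $n$, smaller elementary abelian factor) gives $\order{V_*(F\overline{G})}=n\cdot\order{F}^{(\order{\overline{G}}+\order{\overline{G}_T})/2-1}$. The remaining work is bookkeeping of exponents, for which the key observations are that $G_T=H_T\times E$ and $\overline{G}_T=H_T\times E'$, so that $\order{G_T}=\order{H_T}\cdot\order{E}$, $\order{\overline{G}_T}=\tfrac12\order{G_T}$, and $\order{\overline{G}}=\tfrac12\order{G}$. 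Substituting and adding the two exponents, one checks that $(\order{G}+\order{G_T})/4+(\order{\overline{G}}+\order{\overline{G}_T})/2-1$ collapses to $\tfrac12(\order{G}+\order{G_T})-1$, which is exactly the claimed exponent, while the factor $n$ is carried along unchanged.

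The step I expect to require the most care is the closure of $\Theta$ under direct products with an elementary abelian $2$-group, since it is what keeps the hypothesis of Theorem~\ref{theorem_main} valid at every stage of the induction; once that is secured, the rest reduces to the routine verification that $c\notin G_P$ together with the exponent arithmetic.
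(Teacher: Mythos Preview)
Your proposal is correct and follows essentially the same route as the paper: induct on $\order{E}$, strip off one $C_2$ factor at a time, and apply Theorem~\ref{theorem_main} to pass from $H\times E$ to $H\times E'$. The only differences are cosmetic---the paper takes $\order{E}=2$ as its base case rather than $E$ trivial, and it simply asserts that the quotient lies in $\Theta$ and tacitly uses $c\notin G_P$, whereas you spell out both verifications; your added care is appropriate and does not change the argument.
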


\begin{proof} 
	Let $C_2$ be the cyclic group of order two
	and $c$ be the generator element of $C_2$.
	Suppose that $H$ belongs to $\Theta$ and satisfies that $|V_*(FH)|=n \cdot \order{F}^{\frac{|H|+|H_T|}{2}-1}$ for some $n$. Let $G=H\times C_2$. Using the fact that $\order{G}+\order{G_T}=2(\order{H}+\order{H_T})$ the previous theorem yields information about the order
	\[
	\order{V_*(FG)}=|F|^{\frac{\order{G}+\order{G_T}}{4}}|V_*(FH)|.
	\]
	We can compute that 
	\[
    \order{V_*(FG)}=|F|^{\frac{\order{G}+\order{G_T}}{4}} \cdot n \cdot \order{F}^{\frac{|H|+|H_T|}{2}-1}=n \cdot \order{F}^{\frac{|G|+|G_T|}{2}-1}.
    \]
    	
	We now proceed by induction. Let $E$ be a finite elementary ablelian $2$-group of order $2^m$. Choose $c\in E$ and $C=\gp{c}$. Let us denote by $N$ the factor group $G/C\cong G\times E_1$, where $\order{E_1}=2^{m-1}$. It is clear that $N$ also belongs to $\Theta$. From Theorem \ref{theorem_main} we conclude that 
	\[
	\order{V_*(FG)}=|F|^{\frac{\order{G}+\order{G_T}}{4}}|V_*(FN)|,
	\]
	where $|V_*(FN)|=n \cdot \order{F}^{\frac{|N|+|N_T|}{2}-1}$.
	Since $\order{G}+\order{G_T}=2(\order{N}+\order{N_T})$ we have that
	\[
	\order{V_*(FG)}=|F|^{\frac{\order{G}+\order{G_T}}{4}} \cdot n \cdot \order{F}^{\frac{|N|+|N_T|}{2}-1}=n \cdot \order{F}^{\frac{|G|+|G_T|}{2}-1}.
	\]
	
\end{proof} 

\begin{corollary}\label{corollary_masodrendu}
	Let $D$ and $Q$ be the dihedral and the generalized quaternion group or order $2^{m}$ $(m>2)$, respectively, $E$ be a finite elementary abelian $2$-group and $F$ a finite field of characteristic two.
	Then $\order{V_*(FG)}=n\cdot \order{F}^{\frac{1}{2}(|G|+|G_T|)-1}$, where $n=1$ if $G=D \times E$ and $n=4$ if $G=Q \times E$.
\end{corollary}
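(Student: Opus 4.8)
The plan is to observe that this corollary is an immediate consequence of Corollary~\ref{corollary_every}, once the two hypotheses of that corollary---membership in $\Theta$ and the precise multiplicative form of the order of the unitary subgroup---are verified for the two building blocks $D$ and $Q$.

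First I would confirm that $D$ and $Q$ belong to $\Theta$, as already remarked before \thmref{theorem_main}. For the dihedral group $D=\gp{a,b}$ with $a^{2^{m-1}}=b^2=1$ and $a^b=a^{-1}$, the set $D\setminus D_T$ consists precisely of the rotations $a^i$ that are not equal to their own inverse; conjugation by a rotation fixes such an element, while conjugation by a reflection inverts it. Hence $g^h\in\{g,g^{-1}\}$ for every $g\in D\setminus D_T$ and every $h\in D$. An entirely analogous computation for $Q=\gp{a,b}$ with $a^{2^{m-1}}=1$, $b^2=a^{2^{m-2}}$ and $a^b=a^{-1}$ shows the same property, so $Q\in\Theta$ as well.

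Next I would record the orders of the unitary subgroups of the two base groups in the shape demanded by Corollary~\ref{corollary_every}. By \lemref{roza}(i) we have $\order{V_*(FD)}=\order{F}^{\frac{1}{2}(\order{D}+\order{D_T})-1}$, which is exactly $n\cdot\order{F}^{\frac{1}{2}(\order{D}+\order{D_T})-1}$ with $n=1$; by \lemref{roza}(ii) we have $\order{V_*(FQ)}=4\cdot\order{F}^{\frac{1}{2}(\order{Q}+\order{Q_T})-1}$, i.e. the same form with $n=4$.

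Finally I would apply Corollary~\ref{corollary_every} with $H=D$ (and $n=1$) or $H=Q$ (and $n=4$). Since both base groups lie in $\Theta$ and have unitary order of the required form, the corollary yields $\order{V_*(FG)}=n\cdot\order{F}^{\frac{1}{2}(\order{G}+\order{G_T})-1}$ for $G=H\times E$, with $n=1$ in the dihedral case and $n=4$ in the quaternion case, as claimed. Because every hypothesis is supplied directly by the preceding results, there is no genuine obstacle here; the only point requiring any care is the routine verification that $D,Q\in\Theta$, which reduces to the elementary computation with rotations and reflections sketched above.
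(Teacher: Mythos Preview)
Your proposal is correct and matches the paper's intended argument: the paper states this corollary without proof, treating it as immediate from Corollary~\ref{corollary_every} together with \lemref{roza} and the remark preceding \thmref{theorem_main} that $D,Q\in\Theta$. Your write-up simply makes these implicit steps explicit.
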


Applying Theorem \ref{theorem_main} for the semidirect product $C_4 \rtimes C_4$ we can prove the following.

\begin{lemma}\label{lemma_feldirekt}
	Let $G=(C_4 \rtimes C_4) \times E$, where $C_4 \rtimes C_4 =\gp{a,b\;\vert\; a^4=b^4=1,(a,b)=a^2}$ and $E$ is a finite elementary abelian $2$-group. Let $F$ a finite field of characteristic two.  Then $\order{V_*(FG)}=4\cdot \order{F}^{\frac{1}{2}(|G|+|G_T|)-1}$.
\end{lemma}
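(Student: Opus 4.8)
The plan is to apply \thmref{theorem_main} exactly once, choosing a central non-square involution whose quotient lands in a class already understood, and then to read off the order from Corollary~\ref{corollary_masodrendu}. Write $K=\gp{a,b}$ for the factor $C_4\rtimes C_4$, so that $G=K\times E$ and $\order{G}=16\cdot\order{E}$. From $(a,b)=a^2$ one gets $b^{-1}ab=a^{-1}$, whence a direct check shows $\zeta(K)=\gp{a^2,b^2}\cong C_2\times C_2$ and $\zeta(K)[2]=\zeta(K)=\{1,a^2,b^2,a^2b^2\}$. Computing $(a^ib^j)^2=a^{i(1+(-1)^j)}b^{2j}$ shows that the set of squares of $K$ is exactly $\{1,a^2,b^2\}$. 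Consequently $c=a^2b^2$ is a central element of order two which is not a square; since squaring in $G=K\times E$ affects only the $K$-coordinate, $\zeta(G)[2]=\zeta(G)$ and $c\in\zeta(G)[2]\setminus G_P$, as required by \thmref{theorem_main}.

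Next I would identify the quotient. Put $C=\gp{c}$. In $K/C$ the relation $\overline{a^2b^2}=1$ forces $\bar a^2=\bar b^2$, and together with $\bar a^4=1$ and $\bar b^{-1}\bar a\bar b=\bar a^{-1}$ this is precisely the presentation of the quaternion group $Q_8$; hence $\overline{G}=G/C\cong Q_8\times E$. I would then verify $\overline{G}\in\Theta$: in $Q_8$ every element outside $\{1,-1\}=(Q_8)_T$ is fixed or inverted under conjugation, and since each element of $E$ has order at most two, an element $(q,e)$ with $q$ of order four satisfies $(q,e)^h=(q^{\pm1},e)=(q,e)^{\pm1}$. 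Thus $\overline{G}\in\Theta$ and \thmref{theorem_main} applies.

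Finally I would assemble the orders. A short count gives that the set of elements of order at most two in $K$ equals $\zeta(K)$, so $\order{G_T}=4\order{E}$ and $\frac{\order{G}+\order{G_T}}{4}=5\order{E}$. By Corollary~\ref{corollary_masodrendu} applied to the generalized quaternion factor $Q_8$ one has $\order{V_*(F\overline{G})}=4\cdot\order{F}^{\frac12(\order{\overline{G}}+\order{\overline{G}_T})-1}$, and since $\order{\overline{G}}=8\order{E}$ and $\order{\overline{G}_T}=2\order{E}$ this equals $4\cdot\order{F}^{5\order{E}-1}$. Substituting into \thmref{theorem_main} gives $\order{V_*(FG)}=\order{F}^{5\order{E}}\cdot4\cdot\order{F}^{5\order{E}-1}=4\cdot\order{F}^{10\order{E}-1}$, which coincides with $4\cdot\order{F}^{\frac12(\order{G}+\order{G_T})-1}$ because that exponent is also $10\order{E}-1$.

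I expect the only genuine subtlety to be the structural identifications: verifying that $a^2b^2$ is a non-square central involution (so that \thmref{theorem_main} is even applicable), and confirming that collapsing it turns $C_4\rtimes C_4$ into $Q_8$ rather than into a dihedral group, which is what makes the factor $n=4$ rather than $n=1$. Once those are settled, both the membership $\overline{G}\in\Theta$ and the exponent bookkeeping are routine.
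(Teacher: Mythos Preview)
Your argument is correct and shares the paper's key step: take $c=a^2b^2\in\zeta(G)[2]\setminus G_P$ and pass to the quotient, which picks up a $Q_8$ factor. The difference lies in how the elementary abelian direct factor $E$ is handled. The paper first treats the case $E=1$ via \thmref{theorem_main} and \lemref{roza}(ii), and then invokes Corollary~\ref{corollary_every} to extend to general $E$; that last step requires $C_4\rtimes C_4\in\Theta$. You instead apply \thmref{theorem_main} once directly to $G=K\times E$, needing only that $\overline{G}\cong Q_8\times E$ lies in $\Theta$, and then finish with Corollary~\ref{corollary_masodrendu}. This one-shot route is slightly more economical and, as it happens, more robust: since $b^a=a^2b\notin\{b,b^{-1}\}$ in $K$, one actually has $C_4\rtimes C_4\notin\Theta$, so the paper's final sentence would need a small repair, whereas your argument only ever requires $\Theta$-membership of $Q_8\times E$, which you verify correctly.
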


\begin{proof}
	Suppose that $G=C_4 \rtimes C_4$.
	Let $C$ be the group generated by $c=a^2b^2$. Since $c\in \zeta(G)[2]\setminus G_P$ and 	
	$\overline{G}=G/C\cong Q_8\in \Theta$ we have 
	\[
	|V_*(FG)|=\order{F}^{\frac{|G|+|G_T|}{4}}|V_*(FQ_8)|
	\]
	by Theorem \ref{theorem_main}.
	According to Lemma \ref{roza} $(ii)$ and the fact that $|G_T|=4$
	\[
	|V_*(FG)|=\order{F}^{\frac{|G|+|G_T|}{4}}\cdot 4\cdot \order{F}^{\frac{|G|+|G_T|}{4}-1}=4\cdot \order{F}^{\frac{|G|+|G_T|}{2}-1}.
	\]
	
	Since $(C_4 \rtimes C_4) \times E \in \Theta$, where $E$ is an elementary abelian $2$-group the proof follows from Corollary \ref{corollary_every}.
	
\end{proof}

\begin{corollary}\label{lemma_G3}
	Let $G=H_{2^n}=\gp{a,b,c\;\vert\; a^{2^{n-2}}=b^2=c^2=1,(a,b)=c, \; (a,c)=(b,c)=1,n\geq 4}$ be a group and $F$ a finite field of characteristic two. Then $\order{V_*(FG)}= 2\cdot \order{F}^{\frac{1}{2}(|G|+|G_T|)-1}$.
\end{corollary}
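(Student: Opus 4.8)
The plan is to apply \thmref{theorem_main} with the central subgroup $C=\gp{c}$, so I first verify its hypotheses and then evaluate the resulting formula.

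First I would establish the two structural facts the theorem requires. Since $(a,c)=(b,c)=1$, the element $c$ is central of order two, hence $c\in\zeta(G)[2]$. To see that $c\notin G_P$ I would compute all squares in $G$. Writing a general element as $a^ib^jc^k$ and using the relation $ba=abc$ (equivalently $ba^i=a^ibc^i$, since $c$ is central of order two), one gets $(a^ic^k)^2=a^{2i}$ and $(a^ibc^k)^2=a^{2i}c^i$, so that $G_P=\{a^{2i}\}\cup\{a^{2i}c^i\}$. Checking that neither family can equal $c$ reduces, using $\gp{a}\cap\gp{c}=\{1\}$ (which follows from the normal form $a^ib^jc^k$), to the impossibility of $2^{n-3}\mid i$ for an odd $i$ when $n\geq 4$; this yields $c\notin G_P$. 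Finally, since $\gp{c}=G'$, the quotient $\overline{G}=G/C$ is the abelianization, isomorphic to $C_{2^{n-2}}\times C_2$, and therefore lies in $\Theta$.

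With the hypotheses in place, \thmref{theorem_main} gives
\[
\order{V_*(FG)}=\order{F}^{\frac{\order{G}+\order{G_T}}{4}}\cdot\order{V_*(F\overline{G})},
\]
and it remains to evaluate $\order{G_T}$ and $\order{V_*(F\overline{G})}$. From the square computations above, $g^2=1$ forces $a^{2i}=1$, with the extra constraint that $i$ be even in the $j=1$ case; since $2^{n-3}$ is even for $n\geq 4$, one counts exactly the eight elements $\{1,c,a^{2^{n-3}},a^{2^{n-3}}c,b,bc,a^{2^{n-3}}b,a^{2^{n-3}}bc\}$, so $\order{G_T}=8$. For the abelian quotient \lemref{szakacs} applies: here $\overline{G}^2\cong C_{2^{n-3}}$ is cyclic, so $\order{\overline{G}^2[2]}=2$, while $\order{\overline{G}_T}=4$, giving $\order{V_*(F\overline{G})}=2\cdot\order{F}^{2^{n-2}+1}$.

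Substituting $\order{G}=2^n$ and $\order{G_T}=8$, the exponent is $\frac{\order{G}+\order{G_T}}{4}=2^{n-2}+2$, so
\[
\order{V_*(FG)}=\order{F}^{2^{n-2}+2}\cdot 2\cdot\order{F}^{2^{n-2}+1}=2\cdot\order{F}^{2^{n-1}+3}=2\cdot\order{F}^{\frac{1}{2}(\order{G}+\order{G_T})-1},
\]
which is the assertion. The main obstacle is the bookkeeping in the square computation: getting the commutator relation $ba^i=a^ibc^i$ right and tracking the parity conditions on $i$ correctly is what makes both the verification $c\notin G_P$ and the count $\order{G_T}=8$ go through, after which everything reduces to routine substitution into the formulas of \thmref{theorem_main} and \lemref{szakacs}.
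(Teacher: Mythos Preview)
Your proof is correct and follows the same route as the paper: apply \thmref{theorem_main} with $C=G'=\gp{c}$ and then evaluate the quotient factor via \lemref{szakacs}. The only difference is cosmetic---you verify $c\notin G_P$ and count $\order{G_T}=8$ explicitly, whereas the paper asserts these facts and packages the final substitution through the relation $\order{G_T}=2\order{\overline{G}_T}$.
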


\begin{proof}
	Let $G'$ be the commutator subgroup of $G$ generated by $c$. It is obvious that $c\in \zeta(G)[2]\setminus G_P$ and
	$\overline{G}=G/G'\cong C_{2^{n-2}} \times C_2\in\Theta$. 
	Therefore $	|V_*(FG)|=\order{F}^{\frac{|G|+|G_T|}{4}}\big|V_*(F\overline{G})\big|$ by Theorem \ref{theorem_main}.

	According to Lemma \ref{szakacs} and the fact that $|G_T|=2|\overline{G}_T|$ we have
	\[
	|V_*(FG)|=\order{F}^{\frac{|G|+|G_T|}{4}}\cdot 2 \cdot \order{F}^{\frac{|G|+|G_T|}{4}}=2\cdot \order{F}^{\frac{1}{2}(|G|+|G_T|)-1}.
	\]
	
\end{proof}

\section{The Order of $V_*(FG)$ for basic groups of order $2^4$}

In this section we prove the conjecture that the order of $*$-unitary subgroup is divisible by $|F|^{\frac{1}{2}(\order{G}+\order{G_T})-1}$, where 
$G$ is a group of order $2^4$.
 
First, let $G$ be the semidihedral group of order $2^4$.
This group is defined by the following generators and their relations
\[
D^-_{16}=\gp{a,b\;\vert\; a^{8}=b^2=1,(a,b)=a^{2}}.
\]

We need the following two lemma.

\begin{lemma}\label{lemma_field1}
	Let $F$ be a finite field of characteristic two. The mapping $\tau : F \rightarrow F$, such that $\tau(x)=x^2+x$ is a homomorphism on the additive group of $F$, with kernel $\ker(\tau) =\{0,1\}$. 
\end{lemma}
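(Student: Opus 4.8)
The plan is to verify the two assertions directly, since both are immediate consequences of working in characteristic two. For additivity, I would first recall that in any commutative ring of characteristic two the Frobenius map $x \mapsto x^2$ is additive, because $(x+y)^2 = x^2 + 2xy + y^2 = x^2 + y^2$, the cross term vanishing as $2xy = 0$. Since $\tau$ is the sum of the Frobenius map and the identity map, both of which are additive, $\tau$ itself is additive; concretely,
\[
\tau(x+y) = (x+y)^2 + (x+y) = x^2 + y^2 + x + y = \tau(x) + \tau(y).
\]
This establishes that $\tau$ is a homomorphism of the additive group of $F$ into itself.

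Next I would determine the kernel by solving $\tau(x) = 0$, that is $x^2 + x = 0$. Factoring gives $x(x+1) = 0$, and since $F$ is a field, hence an integral domain with no zero divisors, either $x = 0$ or $x + 1 = 0$. In characteristic two we have $-1 = 1$, so the second case yields $x = 1$. Therefore $\ker(\tau) = \{0,1\}$, which is the desired conclusion.

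There is no real obstacle here; the statement is a routine structural fact about finite fields of characteristic two, and the only points requiring a word of care are the vanishing of the middle term $2xy$ in the binomial expansion and the identification $-1 = 1$. I record the lemma separately because the description of $\ker(\tau)$ and the additivity of $\tau$ will both be invoked when analyzing the unitary subgroup of $FD^-_{16}$ in the subsequent argument.
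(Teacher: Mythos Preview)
Your proof is correct and follows essentially the same route as the paper: you verify additivity via the characteristic-two identity $(x+y)^2=x^2+y^2$ and compute the kernel by factoring $x^2+x=x(x+1)$ in the field $F$. The only difference is cosmetic---you add a remark about the Frobenius map and explicitly note $-1=1$, but the argument is the same.
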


\begin{proof}
	It is clear that $\tau(x+y)=(x+y)^2+(x+y)=x^2+y^2+x+y=\tau(x)+\tau(y)$ for every $x,y\in F$. Since $x^2+x=x(x+1)=0$ if and ony if either $x=0$ or $x=1$, the proof is complete.	
\end{proof}

For a given parameter $A\in F$ and unknowns $w_1,w_2,w_3,w_4 \in F$ let us define the following equation system
\begin{equation}\label{system}
\begin{cases}
w_1+w_2+w_3+w_4=1\\
w_1w_4+w_2w_3=A \\
w_1w_2+w_3w_4=0.
\end{cases}
\end{equation}

\begin{lemma}\label{lemma_field}
	Let $\mathbb S$ be a subset of the field $F$ which contains all the elements $A\in F$ for which the equation system \ref{system} has a solution in $F$. Then $\order{\Bbb S}=\frac{\order{F}}{2}$.
\end{lemma}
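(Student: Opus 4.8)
The plan is to count, for each value $A \in F$, whether the system \eqref{system} is solvable, and then show exactly half of the field values $A$ admit a solution. First I would eliminate variables to reduce the three equations to a single condition on $A$. From the first equation set $w_1+w_2+w_3+w_4=1$; it is natural to introduce the symmetric combinations $s=w_1+w_4$ and $t=w_2+w_3$, so that $s+t=1$, and to note that the second and third equations involve the products $w_1w_4$ and $w_2w_3$ (in the combination $A = w_1w_4+w_2w_3$) alongside the cross terms $w_1w_2+w_3w_4=0$. The aim is to massage these into an equation of the shape $\tau(z)=f(A)$ for some auxiliary unknown $z$ and some explicit $f(A)$, where $\tau(z)=z^2+z$ is the map from Lemma \ref{lemma_field1}.

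The key step is this reduction to a quadratic of Artin--Schreier type. Once the system is shown to be solvable if and only if some element $f(A) \in F$ lies in the image of $\tau$, Lemma \ref{lemma_field1} finishes the count: since $\tau$ is an additive homomorphism with $\ker(\tau)=\{0,1\}$ of order two, its image $\im(\tau)$ has index two in the additive group $F$, so exactly $\tfrac{\order{F}}{2}$ elements of $F$ are hit. I would then argue that the assignment $A \mapsto f(A)$ is a bijection (or at least a measure-preserving map) on $F$, so that the $A$ for which $f(A) \in \im(\tau)$ number exactly $\tfrac{\order{F}}{2}$, giving $\order{\Bbb S}=\tfrac{\order{F}}{2}$.

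The main obstacle will be carrying out the elimination cleanly in characteristic two, where the usual discriminant arguments degenerate: one cannot complete the square by dividing by $2$, and a quadratic $x^2+bx+d=0$ with $b\neq 0$ is solvable precisely when $d/b^2 \in \im(\tau)$, which is exactly the Artin--Schreier criterion one wants to surface. I expect that after substituting $w_4 = 1 - w_1 - w_2 - w_3 = 1+w_1+w_2+w_3$ and using the product equation $w_1w_2+w_3w_4=0$ to solve for one more variable, the remaining freedom collapses to a single quadratic in one unknown whose solvability is governed by a condition of the form $g(A) \in \im(\tau)$. The care required is to track which parameter values $A$ force the leading coefficient of that quadratic to vanish (the degenerate linear case), and to check that these boundary cases do not disturb the exact count of $\tfrac{\order{F}}{2}$.
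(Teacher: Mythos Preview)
Your overall strategy---reduce solvability of the system to an Artin--Schreier condition $\tau(z)=f(A)$ and then invoke Lemma~\ref{lemma_field1} to get an index-two image---is exactly what the paper does. But the paper's execution is sharper in two places, and your proposal leaves both as promises rather than arguments.

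First, the pairing. You group $(w_1,w_4)$ and $(w_2,w_3)$ via $s=w_1+w_4$, $t=w_2+w_3$; that is the natural pairing for the equation $A=w_1w_4+w_2w_3$, but the side condition $w_1w_2+w_3w_4=0$ cuts across it and does not simplify in $s,t$. The paper instead sets $z=w_1+w_3$. Then $1-z=w_2+w_4$ (char.~two), and
\[
\tau(z)=z(1+z)=(w_1+w_3)(w_2+w_4)=(w_1w_2+w_3w_4)+(w_1w_4+w_2w_3)=0+A=A.
\]
So $f(A)=A$ on the nose, $\mathbb S\subseteq\im(\tau)$, and your worry about $A\mapsto f(A)$ being a bijection evaporates.

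Second, the reverse inclusion. Your sketch says ``solvable if and only if'' but gives no construction. The paper supplies one: given $A=\tau(w)$ with $w\neq 0$, choose any $w_1,w_3$ with $w_1+w_3=w$, set $w_2=(A+w_1+ww_1)w^{-1}$ and $w_4=w_2+w+1$, and verify all three equations (with $A=0$, $w=0$ handled separately by $(0,1,0,0)$). This is where the actual work lies, and your plan of ``substitute $w_4=1+w_1+w_2+w_3$ and track degenerate cases'' would get there, but you have not done it. Once both inclusions are in hand, $\mathbb S=\im(\tau)$ and Lemma~\ref{lemma_field1} gives $|\mathbb S|=|F|/2$ immediately.
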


\begin{proof} 
	First, we will prove that ${\Bbb S}  \subseteq  		\im(\tau)$. Suppose that $A\in {\Bbb S}$ and 	
	$w_1,w_2,w_3,w_4\in F$ satisfy the equation system \ref{system}. Then $\tau(w_1+w_3)=(w_1+w_3)^2+(w_1+w_3)=(w_1+w_3)(1+w_1+w_3)=(w_1+w_3)(w_2+w_4)=A$. Thus for $w=w_1+w_3$ we have $\tau(w)=A$ so ${\Bbb S} \subseteq \im(\tau)$. 
	
	Assume that $\tau(w)=A$ for some $w\in F$. 
	If $w=0$, then $\tau(w)=A=0$ and $w_1=0,w_2=1,w_3=0,w_4=0$ is a solution of the equation system \ref{system}.
	Let $w_1+w_3=w\ne 0$ for some $w_1,w_3\in F$. Set $w_2=(A+w_1+ww_1)w^{-1}$ and $w_4=w_2+w+1$. It is clear that 
	$w_1+w_3+w_2+w_4=w+w+1=1$.
	Furthermore,
	$w_1w_2+w_3w_4=w_1w_2+(w_1+w)(w_2+w+1)=w_1(1+w)+A+ww_2$, because $\tau(w)=w^2+w=A$. Since $w_2=(A+w_1+ww_1)w^{-1}$ we can compute that $w_1(w+1)+ww_2+A=w_1(1+w)+(A+w_1+ww_1)+A=0$.
	Thus we have proved that $w_1w_2+w_3w_4=0$.
	Finally, 
	$A=w(w+1)=(w_1+w_3)(w_2+w_4)=w_1w_2+w_1w_4+w_2w_3+w_3w_4=w_1w_4+w_2w_3$, which shows that $\im(\tau) = {\Bbb S}$ and the proof is complete.
	
\end{proof}

Since the subgroup $N=\gp{a^2}$ is normal in $D^-_{16}$, every element of $FD^-_{16}$ can be written in the following form $x=x_1+x_2a+x_3b+x_4ab$, where $x_i \in FN$.
Let us compute
\[
\begin{split}
&(x_1+x_2a+x_3b+x_4ab)(x_1+x_2a+x_3b+x_4ab)^*=\\
&\big(x_1x_1^*+x_2x_2^*+x_3x_3^*+x_4x_4^*)+(x_2x_1^*+x_4x_3^*)a+(x_1x_2^*+x_3x_4^*)a^{7}+\\
&(x_1x_4^*+x_2x_3^*)(a+a^5)b.
\end{split}
\]

Consider the natural homomorphism of $FG$ to $F$, which is called augmentation and denoted by $\chi$. Set $w_i=\chi(x_i)$. Using the previous computations and the fact that $\zeta(G)=\gp{a^4}$ we have proved that if $xx^*\in S_{\zeta(G)}$, then $w_1+w_2+w_3+w_4=1$ and $w_1w_2+w_3w_4=0$. Therefore if $xx^*\in S_{\zeta(G)}$, then there exist $w_1,w_2,w_3,w_4\in F$ satisfying equation system \ref{system}, for some $A \in {\Bbb S}$. 

\begin{lemma}\label{lemma_semidihedral}
	Let $G=D^-_{16}$ be the semidihedral group of order $16$ and $F$ a finite field of characteristic two. Then $\order{V_*(FD^-_{16})}=2\cdot |F|^{\frac{|G|+|G_T|}{2}-1}$.
\end{lemma}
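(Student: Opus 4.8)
The plan is to run the recursion of \lemref{lemma_main} with the central subgroup $H=\zeta(G)=\gp{a^4}$ of order two. First I would record the two inputs the recursion needs: since $\overline{G}=G/\gp{a^4}$ is dihedral of order $8$, \lemref{roza}$(i)$ gives $\order{V_*(F\overline{G})}=\order{F}^{6}$, and because $|I(H)^+|=\order{F}^{\order{G}/2}=\order{F}^{8}$ one gets $\order{N^*_\Psi}=|I(H)^+|\cdot\order{V_*(F\overline{G})}=\order{F}^{14}$. Granting the quotient formula $\order{V_*(FG)}=\order{N^*_\Psi}/\order{S_{\zeta(G)}}$ (justified in the last step), the whole lemma reduces to the single computation $\order{S_{\zeta(G)}}=\tfrac12\order{F}^{4}$; indeed this gives $\order{V_*(FG)}=\order{F}^{14}/(\tfrac12\order{F}^{4})=2\cdot\order{F}^{10}$, and $10=\tfrac12(\order{G}+\order{G_T})-1$ since $\order{G_T}=6$.

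To compute $\order{S_{\zeta(G)}}$ I would first pin down its ambient space. By Corollary~\ref{corrolary_1} every element of $S_{\zeta(G)}$ has the form $1+\alpha_1u_1+\alpha_2u_2+\alpha_3u_3+\gamma v$, where $u_1=a^2\widehat{H}=a^2+a^6$, $u_2=ab\widehat{H}=ab+a^5b$ and $u_3=a^3b\widehat{H}=a^3b+a^7b$ are the three distinct generators arising from $M=\{a^2,a^6,ab,a^3b,a^5b,a^7b\}$, and $v=(a+a^7)\widehat{H}=a+a^3+a^5+a^7$ is the only non-$M$ generator (the coset $Nb$, $N=\gp{a^2}$, lies entirely in $G_T$, so by \lemref{lemma_3} its contribution to $xx^*$ vanishes identically, which is why the displayed expansion carries no $Nb$-term). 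Reading off the expansion: the diagonal gives $1+\alpha_1u_1$, the $Nab$-part $(x_1x_4^*+x_2x_3^*)(a+a^5)b$ gives $\alpha_2u_2+\alpha_3u_3$ with $\alpha_2+\alpha_3=\chi(x_1x_4^*+x_2x_3^*)=w_1w_4+w_2w_3=A$, and the $Na$-part gives $\gamma v$. Thus $S_{\zeta(G)}$ lies in a four-dimensional $F$-space, and everything turns on which quadruples $(\alpha_1,\alpha_2,\alpha_3,\gamma)$ occur.

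The heart of the matter is the constraint on $\alpha_2+\alpha_3$. For necessity I would use that $x\in N^*_\Psi$ is normalized, giving $w_1+w_2+w_3+w_4=1$, and that $xx^*\in I(H)^+$ forces the $Na$-part to be balanced, giving $w_1w_2+w_3w_4=0$; hence the $w_i$ solve system~\eqref{system} with parameter $A=\alpha_2+\alpha_3$, so $A\in\Bbb S$, and \lemref{lemma_field} gives $\order{\Bbb S}=\tfrac12\order{F}$. This bounds $\order{S_{\zeta(G)}}\le\order{F}^{3}\cdot\order{\Bbb S}=\tfrac12\order{F}^{4}$. For the matching lower bound I would realize every admissible quadruple: $\gamma$ is free by \lemref{lemma_5} with $g=a$ (where $a^2\notin H$); $\alpha_1$ is free by evaluating $\Phi$ on $x=x_1+x_3b$ with $x_1,x_3\in FN$, for which $xx^*=1+\alpha_1u_1$ with $\alpha_1$ a quadratic expression in the coefficients of $x_1,x_3$ that already sweeps all of $F$ because squaring is a bijection of $F$; and each $A\in\Bbb S$, split arbitrarily as $\alpha_2+\alpha_3=A$, is attained from the explicit solution of \eqref{system} built in the proof of \lemref{lemma_field}. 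Since $\Bbb S=\im(\tau)$ is an additive subgroup of index two (\lemref{lemma_field1}), the admissible quadruples form an $F$-subspace, so $\order{S_{\zeta(G)}}=\tfrac12\order{F}^{4}$ exactly.

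The step I expect to be the main obstacle is this lower bound: checking that $\alpha_1$, the splitting of $A$ into $(\alpha_2,\alpha_3)$, and $\gamma$ can be prescribed simultaneously by honest elements $x$, and that the witnesses can be arranged to lie in $N^*_\Psi$ and not merely in $V(FG)$. A secondary point needing care is the quotient formula itself, because $S_{\zeta(G)}$ need not be central in $N^*_\Psi$ for the semidihedral group; I would secure it either by observing that \lemref{lemma_field1} makes $\im(\Phi|_{N^*_\Psi})$ already a subgroup (hence equal to $S_{\zeta(G)}$), or directly by the coset argument that the fibres of $\Phi|_{N^*_\Psi}$ are precisely the cosets of $V_*(FG)$, giving $\order{V_*(FG)}=\order{N^*_\Psi}/\order{\im(\Phi|_{N^*_\Psi})}$ with no centrality assumption.
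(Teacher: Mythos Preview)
Your approach mirrors the paper's: take $H=\zeta(G)=\langle a^4\rangle$, identify the four--parameter family of symmetric elements in $I(H)^+$ together with the constraint $\alpha_2+\alpha_3\in\Bbb S$ coming from system~\eqref{system}, realize the generators explicitly to obtain $|S_{\zeta(G)}|=\tfrac12|F|^4$, and combine with $|V_*(FD_8)|=|F|^6$ via \lemref{lemma_main}. The paper builds $S_{\zeta(G)}$ as $N_1\times N_2$ with exactly the witnesses you list (namely $\delta+\delta a^2+a$ for $u_1$, $\alpha(a+a^7)+b$ for $u_2+u_3$, \lemref{lemma_5} for $v$, and $w_1+w_2a+w_3b+w_4ab$ for $u_2$), so the two arguments coincide.

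You are in fact more scrupulous than the paper on one point: $S_{\zeta(G)}$ is \emph{not} central in $N^*_\Psi$ here (conjugation by $a$ swaps $u_2=ab\widehat C$ and $u_3=a^3b\widehat C$), so the hypothesis of \lemref{lemma_main} does not literally hold and the paper glosses over this. Your fibre argument---$\Phi(x)=\Phi(y)$ iff $x^{-1}y\in V_*(FG)$, hence $|V_*(FG)|=|N^*_\Psi|/|\im\Phi|$---is the correct repair, and the ``simultaneous realization'' obstacle you flag is resolved by observing that three of the four generator families ($v$, $u_1$, $u_2+u_3$) are genuinely central in $FG$, so if $yy^*=1+s$ with $s$ central and $xx^*=1+Au_2$, then $(xy)(xy)^*=1+Au_2+s$, giving every admissible quadruple in $\im\Phi$. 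One small slip: the admissible quadruples form an additive (${\mathbb F}_2$--) subgroup, not an $F$-subspace, since $\Bbb S=\im(\tau)$ has index two in $F$; this does not affect the count.
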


\begin{proof}
	Let $M$ be the set $\{g \in G \;|\; g^2=a^4 \}$ and $C=\zeta(G)$.
    It is clear that 
    \[
      M=\{ a^2, a^6,ab,a^3b, a^5b, a^7b \}.
    \]

    Every $*$-symmetric element of $I(C)^+$ can be written as
    \[
     \begin{split}
     1+\alpha_1 (a+a^{-1})\widehat{C}&+\alpha_2 a^2\widehat{C}+\alpha_3 ab\widehat{C}+\\ &\alpha_4 a^3b\widehat{C}+\alpha_5 b+\alpha_6 a^2b+\alpha_7 a^4b+\alpha_8 a^6b,
     \end{split}
    \] 
    where $\alpha_i\in F$ by Corollary \ref{corrolary_1}. According to Lemma \ref{lemma_5} the element $1+\alpha (a+a^{-1})\widehat{C}$ belongs to $S_{C}$ for any $\alpha \in F$.	It follows from Lemma \ref{lemma_3} that $1+\alpha g \not\in S_C$ if $g\in G_T$. 
	
	Since $\delta+\delta a^2+a \in V(FG)$ for every $\delta \in F$, an easy computation shows that
	\[
	\begin{split}
	\big(\delta+\delta a^2+a\big)\big(\delta+\delta a^2+a)^*=
	1+\delta^2(a^2+a^{-2})=1+\delta^2 a^{2}\widehat{C},
	\end{split}
	\]
	which confirm that  $\delta+\delta a^2+a \in N^*_{\Psi}$.
	Since $\eta(\alpha)=\alpha^2$ is an automorphism of $U(F)$ we can pick $\delta$ such that $\alpha_2=\delta^2$. Therefore
	$1+\alpha_2 a^{2}\widehat{C} \in S_{C}$ for every $\alpha_2\in F$.
	
	A straightforward computation shows that
	\[
	\big(\alpha (a+a^7)+b\big)\big(\alpha (a+a^7)+b\big)^*=1+\alpha^2 a^2\widehat{C}+\alpha (ab+a^3b)\widehat{C}
	\]
	for every $\alpha \in F$ so $\alpha (a+a^7)+ b\in N^*_{\Psi}$.	
	Using Lemma \ref{lemma_6} and the fact that $1+\alpha_2 a^2\widehat{C} \in S_{C}$  we have that $1+\alpha (ab+a^3b)\widehat{C}\in S_C$ for every $\alpha \in F$.
	
	We have proved that the group $N_1$ generated by the set
	\[
	\begin{split}
	\{ 1+\alpha (a+a^{-1})\widehat{C} \} \cup \{ 1+\alpha a^2\widehat{C} \} \cup \{1+ \alpha (ab+a^3b)\widehat{C} \},
	\end{split}
	\]	
	where $\alpha \in F$ is a subgroup of $S_C$ by Lemma \ref{lemma_6} and $\order{N_1}=\order{F}^{3}$ .
	
	Let $w_1+w_2a+w_3b+w_4ab\in FD^-_{16}$ be such that $w_1,w_2,w_3,w_4 \in F$ satisfy the equation system \ref{system}. We have seen that 
	\[
	(w_1+w_2a+w_3b+w_4ab)(w_1+w_2a+w_3b+w_4ab)^*=1+(w_1w_4+w_2w_3)ab\widehat{C}.
	\]
	According to Lemma \ref{lemma_field} the group $N_2$ generated by the elements $1+\alpha ab\widehat{C}$, where $\alpha \in \Bbb S$ is a subgroup of $S_C$ with order $\frac{\order{F}}{2}$. By a similar argument, $1+\alpha a^3b\widehat{C}$ belongs to $S_C$ if $\alpha \in \Bbb S$. It follows that $S_C=N_1 \times N_2$ and $\order{S_C}=\frac{\order{F}^4}{2}$

	Since $\overline{G}=G/\zeta(G) \cong D_{8}$ we have that
	$|V_*(F\overline{G})|=|F|^{\frac{3|G|}{8}}$ by Lemma \ref{roza} (i). 
	It is clear that $\frac{3|G|}{8}-3=\frac{|G_T|}{2}$.
	According to Lemma \ref{lemma_main}
	\[
	|V_*(FG)|=2\cdot \order{F}^{\frac{|G|}{2}}|F|^{\big(\frac{3|G|}{8}-3\big)-1}=2\cdot |F|^{\frac{|G|+|G_T|}{2}-1}.
	\]
	
\end{proof}

\begin{lemma}\label{lemma_modular}
	Let $G=M_{16}=\gp{a,b\;\vert\; a^8=b^2=1,(a,b)=a^4}$ be and $F$ a finite field of characteristic two.
	Then the order of $V_*(FG)$ is equal to $2\cdot |F|^{\frac{|G|+|G_T|}{2}-1}$.
\end{lemma}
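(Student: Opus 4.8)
The plan is to follow the strategy of Lemma~\ref{lemma_semidihedral}, applying Lemma~\ref{lemma_main} to the unique central subgroup of order two. From $(a,b)=a^4$ one gets $a^b=a^5$, whence $\zeta(G)=\gp{a^2}$ has order four and $G'=\gp{a^4}$. I would therefore take $C=\gp{a^4}$, so that $a^4=(a^2)^2\in G_P$; in particular Theorem~\ref{theorem_main} is \emph{not} applicable, and the order of $S_C$ must be found directly. Since $G'=C$, the quotient $\overline G=G/C$ is abelian with $\bar a^4=\bar b^2=1$, i.e. $\overline G\cong C_4\times C_2$. Reading off $|\overline G|=8$, $|\overline G_T|=4$ and $|\overline G^2[2]|=|\gp{\bar a^2}|=2$, Lemma~\ref{szakacs} gives $|V_*(F\overline G)|=2\cdot|F|^5$. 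As $G_T=\{1,a^4,b,a^4b\}$ has four elements, the assertion $|V_*(FG)|=2\cdot|F|^9$ will follow from Lemma~\ref{lemma_main} once I show that $S_C$ is central in $N^*_{\Psi}$ and that $|S_C|=|F|^4$.

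Centrality is immediate here: because $G'=C$, any two elements of $G$ commute modulo $C$, so $hg\widehat C=gh\widehat C$ for all $g,h\in G$; hence every basis element $g\widehat C$ of $I(C)$ is central in $FG$, and therefore $S_C\le I(C)^+$ is central in $N^*_{\Psi}$, as Lemma~\ref{lemma_main} requires. Note also that $\widehat C^2=(1+a^4)^2=0$, so $S_C$ is elementary abelian.

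To pin down $|S_C|$ I would first compute $M=\{g\in G\mid g^2=a^4\}=\{a^2,a^6,a^2b,a^6b\}$, and use the coset identities $a^2\widehat C=a^6\widehat C$ and $a^2b\widehat C=a^6b\widehat C$ to collapse the $M$-generators to two. By Corollary~\ref{corrolary_1}, together with Lemma~\ref{lemma_3} (which kills the $G_T$-supported candidates $\widehat C$ and $b\widehat C$), every element of $S_C$ lies in the group of order $|F|^4$ generated by
\[
1+\alpha a^2\widehat C,\ \ 1+\alpha a^2b\widehat C,\ \ 1+\alpha(a+a^3)\widehat C,\ \ 1+\alpha(ab+a^3b)\widehat C\qquad(\alpha\in F),
\]
whose four coefficient vectors have pairwise disjoint supports; thus $|S_C|\le|F|^4$. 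The last two families lie in $S_C$ for every $\alpha$ by Lemma~\ref{lemma_5}, applied to $a$ (with $a^2\notin C$) and to $ab$ (with $(ab)^2=a^6\notin C$). The first family is realized by the normalized unit $\delta+\delta a^2+a\in N^*_{\Psi}$: expanding gives $(\delta+\delta a^2+a)(\delta+\delta a^2+a)^*=1+\delta^2a^2\widehat C$, and $\delta\mapsto\delta^2$ is a bijection of $F$.

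The main obstacle is the square-type family $1+\alpha a^2b\widehat C$, which I would obtain in the same spirit as the $ab$-term in Lemma~\ref{lemma_semidihedral}. Using $b^{-1}=b$, $ba=a^5b$ and $(a^2b)^{-1}=a^6b$, a direct expansion yields
\[
(\delta+\delta a^2b+b)(\delta+\delta a^2b+b)^*=1+\delta a^2\widehat C+\delta^2 a^2b\widehat C\in S_C;
\]
multiplying this by the already-available generator $1+\delta a^2\widehat C$ (and recalling $\widehat C^2=0$) leaves $1+\delta^2a^2b\widehat C\in S_C$, and letting $\delta^2$ run over $F$ realizes the whole family. The point demanding care is precisely to verify that none of the four factors is cut down by an arithmetic constraint (there is no analogue here of the set $\Bbb S$ of Lemma~\ref{lemma_field}); the explicit units above show each factor is all of $F$, so $|S_C|=|F|^4$. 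Finally Lemma~\ref{lemma_main} gives
\[
|V_*(FG)|=|F|^{\frac{|G|}{2}}\cdot\frac{|V_*(F\overline G)|}{|S_C|}=|F|^{8}\cdot\frac{2|F|^5}{|F|^4}=2\cdot|F|^{\frac12(|G|+|G_T|)-1}.
\]
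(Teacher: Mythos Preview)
Your proof is correct and follows essentially the same approach as the paper: take $H=G'=\gp{a^4}$, apply Lemma~\ref{lemma_main}, compute $|V_*(F\overline G)|=2|F|^5$ for $\overline G\cong C_4\times C_2$ via Lemma~\ref{szakacs}, and show $|S_{G'}|=|F|^4$ by realizing the same four generator families $1+\alpha a^2\widehat{G'}$, $1+\alpha a^2b\widehat{G'}$, $1+\alpha(a+a^3)\widehat{G'}$, $1+\alpha(ab+a^3b)\widehat{G'}$. Your witness units $\delta+\delta a^2+a$ and $\delta+\delta a^2b+b$ differ cosmetically from the paper's $\alpha^2+a+\alpha^2a^2$ and $\alpha a^2+(1+\alpha a^2)b$, but the structure is identical; your explicit check that $I(G')$ is central in $FG$ (since $G'\le C$ forces $gh\widehat C=hg\widehat C$), and hence that $S_{G'}$ is central in $N^*_\Psi$, is a hypothesis of Lemma~\ref{lemma_main} that the paper leaves implicit.
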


\begin{proof}
	Suppose that $y \in S_{G'}$. Evidently, $y$ may be expressed as	\[
	y=1+\beta_1 (a+a^3)\widehat{G'}+\beta_2 a^2\widehat{G'}+\beta_3 \widehat{G'}+\beta_4 b\widehat{G'}+\beta_5 (a+a^3)b\widehat{G'}+\beta_6 a^2b\widehat{G'},
	\] 
	where $\beta_i \in F$.
	
    According to Lemma \ref{lemma_3} the elements $1+\beta_3 \widehat{G'}$ and $1+\beta_4 b\widehat{G'}$ do not belong to $S_{G'}$. Lemma \ref{lemma_5} shows that $\beta_1 (a+a^3)\widehat{G'}$ and $\beta_5 (a+a^3)b\widehat{G'}\in S_{G'}$ for all $\beta_1,\beta_5 \in F$.
    
    Since $\eta(\alpha)=\alpha^2$ is an automorphism of $U(F)$ we can pick $\alpha$ such that $\beta_2=\alpha^2$. Then    
    $\alpha^2+a+\alpha^2 a^2 \in V(FG)$ and we can compute that
    \[
    (\alpha^2+a+\alpha^2 a^2)(\alpha^2+a+\alpha^2 a^2)^*=1+\beta_2 a^2\widehat{G'},
    \]
    which proves that $\alpha^2+a+\alpha^2 a^2 \in N^*_{\Psi}$ and
    $1+\beta_2 a^2\widehat{G'}\in S_{G'}$ for every $\beta_2 \in F$.
    The following identity 
    \[
    \begin{split}
    \big(\alpha a^2+(1+\alpha a^2)b\big)\big(\alpha a^2+(1+\alpha a^2)b\big)^*&=1+\alpha a^2\widehat{G'}+\alpha a^2\widehat{G'}b 
    \end{split}
    \]
    shows that $\alpha a^2+(1+\alpha a^2)b \in N^*_{\Psi}$.
    Therefore $1+\alpha a^2\widehat{G'}+\alpha a^2b\widehat{G'} \in S_{G'}$ for every $\alpha \in F$.
    Since $1+\alpha a^2\widehat{G'}\in S_{G'}$ we conclude that    
    $1+\alpha a^2b\widehat{G'}$ belongs to $S_{G'}$ by Lemma \ref{lemma_6}.
    
    We have proved that $S_{G'}$ is an elementary abelian group generated by the set 
    \[
    \{ 1+\alpha_1 (a+a^3)\widehat{G'}, 1+\alpha_2 a^2\widehat{G'}, 1+\alpha_3 (a+a^3)b\widehat{G'}, 1+\alpha_4 a^2b\widehat{G'} \;|\; \alpha_i \in F \}.
    \]	
    Thus $\order{S_{G'}}=\order{F}^4$. Since $\overline{G}=G/G'\cong C_4 \times C_2$ we have that
	$|V_*(F\overline{G})|=2\cdot |F|^{5}$ by Lemma \ref{szakacs}.
	
	Finally, Lemma \ref{lemma_main} and the fact that $|G_T|=4$ shows that 
	\[
	|V_*(FG)|=2\cdot |F|^{\frac{|G|}{2}+1} =2\cdot |F|^{\frac{|G|+|G_T|}{2}-1}.
	\]
	
\end{proof}

The central product of $D_8\Y C_4$ is defined by the following generators and their relations
\[
G=\gp{a,b,c\;\vert\; a^4=b^2=c^4=1,(a,b)=a^2=c^2, \; (a,c)=(b,c)=1}.
\]

\begin{lemma}\label{lemma_DYC4}
	Let $G=D_8 \Y C_4$ be and $F$ a finite field of characteristic two. Then $\order{V_*(FG)}=|F|^{\frac{|G|+|G_T|}{2}-1}$.
\end{lemma}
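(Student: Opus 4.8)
The plan is to invoke Lemma \ref{lemma_main} with $H=G'$. Since $(a,b)=a^2$ is the only nontrivial commutator relation, $G'=\gp{a^2}=\gp{c^2}$ has order two and $\widehat{G'}=1+a^2$. The quotient $\overline G=G/G'$ is generated by the three commuting involutions $\overline a,\overline b,\overline c$, so $\overline G\cong C_2\times C_2\times C_2$; as $\overline G^2=1$ and $\overline G_T=\overline G$, Lemma \ref{szakacs} gives $|V_*(F\overline G)|=|F|^{7}$. Listing elements, the identity together with the involutions are $\{1,a^2,b,ab,a^2b,a^3b,ac,a^3c\}$, so $|G_T|=8$, and the remaining eight elements form $M=\{g:g^2=a^2\}=\{a,a^3,c,a^2c,bc,abc,a^2bc,a^3bc\}$, all of order four.

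Next I would check the hypothesis of Lemma \ref{lemma_main}. Because $|G'|=2$, for all $g,h$ we have $g^h=g[g,h]\in\{g,ga^2\}$; for $g\in M$ this says $g^h\in\{g,g^{-1}\}$, so $G\in\Theta$ and $(g^h+g)\widehat{G'}=0$. Hence each $g\widehat{G'}$ with $g\in M$ is central in $FG$, and therefore $S_{G'}$ is central in $N^*_\Psi$. Since every element outside $M$ is an involution, the generators $1+\beta_h(h+h^{-1})\widehat{G'}$ of Corollary \ref{corrolary_1} vanish, and using $g\widehat{G'}=(ga^2)\widehat{G'}$ the remaining generators $1+\alpha g\widehat{G'}$ $(g\in M)$ reduce to the four directions $a\widehat{G'},c\widehat{G'},bc\widehat{G'},abc\widehat{G'}$. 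These have pairwise disjoint supports, so by Lemma \ref{lemma_6} $S_{G'}\subseteq 1+V_{a,c,bc,abc}=G_{a,c,bc,abc}$ and $|S_{G'}|\le|F|^4$. The crux is to prove equality.

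To saturate a direction $g\widehat{G'}$ I would use the normalized unit $x=(1+p+r)+pg+rt$ with $t$ an involution. Since $\overline G$ is elementary abelian, $*$ is the identity on $F\overline G$, so $\Psi(x)\in V_*(F\overline G)$ amounts to $\Psi(x)^2=(1+p+r)^2+p^2+r^2=1$; thus $x\in N^*_\Psi$. A direct expansion gives
\[
xx^*=1+(p+p^2+pr)\,g\widehat{G'}+pr\,(gt+tg^{-1}).
\]
If $t$ inverts $g$ then $gt$ is an involution, the last bracket vanishes, and the coefficient $p+p^2+pr$ runs over all of $F$ for any fixed $p\ne0$ as $r$ varies; hence $1+\alpha g\widehat{G'}\in S_{G'}$ for every $\alpha\in F$. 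I would record the inverting involutions explicitly: $t=b$ inverts both $a$ and $abc$, while $t=ab$ inverts $bc$, which settles the directions $a\widehat{G'}$, $abc\widehat{G'}$ and $bc\widehat{G'}$.

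The single real obstacle is the central generator $c$: no involution can invert a central element, so the bracket above cannot be killed directly. Instead I would take $t=b$, for which $cb=bc\in M$ and the same expansion yields $xx^*=1+(1+p+r)p\,c\widehat{G'}+pr\,bc\widehat{G'}$. As the direction $bc\widehat{G'}$ already lies entirely in $S_{G'}$, multiplying by the suitable element $1+pr\,bc\widehat{G'}$ cancels that summand and leaves $1+(1+p+r)p\,c\widehat{G'}$; fixing $p\ne0$ and varying $r$ sweeps the coefficient over all of $F$, so the fourth direction is also full. Therefore $|S_{G'}|=|F|^4$, and Lemma \ref{lemma_main} gives
\[
|V_*(FG)|=|F|^{|G|/2}\cdot\frac{|V_*(F\overline G)|}{|S_{G'}|}=\frac{|F|^{8}\cdot|F|^{7}}{|F|^{4}}=|F|^{11}=|F|^{\frac{|G|+|G_T|}{2}-1}.
\]
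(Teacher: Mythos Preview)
Your proof is correct and follows essentially the same route as the paper: apply Lemma~\ref{lemma_main} with $H=G'$, identify $\overline G\cong C_2^3$ so that $|V_*(F\overline G)|=|F|^7$, and show $|S_{G'}|=|F|^4$ by saturating the four directions $a\widehat{G'},c\widehat{G'},bc\widehat{G'},abc\widehat{G'}$. Your parametric family $x=(1+p+r)+pg+rt$ is a slightly more uniform device than the paper's four ad hoc units, and you explicitly verify the centrality hypothesis of Lemma~\ref{lemma_main} (which the paper omits); note also that your count $|G_T|=8$ is the correct value --- the paper's ``$|G_T|=4$'' is a typo, though its displayed computation $|F|^7/|F|^4=|F|^3=|F|^{|G_T|/2-1}$ is consistent with $|G_T|=8$.
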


\begin{proof}
	It is clear that the commutator subgroup $G'=\gp{a^2}$ and the set $\{g \in G \;|\; g^2=a^2 \}$ coincides with $\{ a,a^3,c,a^2c,bc,abc,a^2bc,a^3bc \}$.
	
	We will prove that $S_{G'}$ is generated by the set $
	\{ 1+\alpha g\widehat{G'} \;|\; g\in G\setminus{G_T} \}$
	and $\alpha \in F$. According to Lemma \ref{lemma_5} each $x\in S_{G'}$ can be written as $x= 1+\alpha_1 a\widehat{G'}+\alpha_2 bc\widehat{G'}+\alpha_3 c\widehat{G'}+\alpha_4 abc\widehat{G'}$.
	
	We first compute	
	\[
	\begin{split}
	(1+\alpha b+\alpha a)(1+\alpha b+\alpha a)^*&=1+\alpha a\widehat{G'},\\
	(1+\alpha c+\alpha a)(1+\alpha c+\alpha a)^*&=1+\alpha c\widehat{G'}+\alpha a\widehat{G'},\\
	(1+\alpha c+\alpha b)(1+\alpha a^2c+\alpha b)^*&=1+\alpha c\widehat{G'}+\alpha^2 bc\widehat{G'},\\
	(a^2c+\alpha ab+\alpha ac)(a^2c+\alpha ab+\alpha ac)^*&=1+(\alpha abc+\alpha a+\alpha^2 bc)\widehat{G'}.
	\end{split}
	\]
	
	Thus the set 
	$\{ 1+\alpha_1 a\widehat{G'}, 1+\alpha_2 c\widehat{G'}, 1+\alpha_3 bc\widehat{G'}, 1+\alpha_4 abc\widehat{G'} \;| \;  \alpha_i \in F \}$
	generates the group $S_{G'}$ by Lemma \ref{lemma_6}. As a consequence we have that $\order{S_{G'}}=\order{F}^{4}$.
	
	Since $\overline{G}=G/G'\cong C_2 \times C_2 \times C_2$  Lemma \ref{szakacs} shows that $|V_*(F\overline{G})|=|F|^{7}$. It is obvious that $|G_T|=4$ therefore 
	\[
	\frac{|V_*(F\overline{G})|}{\order{S_{G'}}}=|F|^{\frac{|G_T|}{2}-1}.
	\]
	
	According to Lemma \ref{lemma_main}
	\[
	|V_*(FG)|=\order{F}^{\frac{|G|}{2}}|F|^{\frac{|G_T|}{2}-1}=|F|^{\frac{|G|+|G_T|}{2}-1}.
	\]
	
\end{proof} 

Theorem \ref{theorem_main} in Section $2$ presents a reduction formula for $\order{V_*(FG)}$. In this Section it was shown that the order of $V_*(FG)$ can be computed using the reduction formula for any basic groups with order $2^4$.
Summarizing, we have the following theorem which confirms the conjecture on the order of $*$-unitary subgroup.

\begin{theorem}\label{theorem_G16}
	Let $F$ be a finite field of characteristic two and $G$ is a non-abelian group of order $2^4$. Then $\order{V_*(FG)}=n\cdot |F|^{\frac{1}{2}(\order{G}+\order{G_T})-1}$, where
	\begin{enumerate}
		\item[$\bullet$] $n=1$ if  $G\cong\{ D_8 \Y C_4, D_{16}, D_8 \times C_2 \}$;
		\item[$\bullet$] $n=2$ if  $G\cong\{ M_{16}, D^-_{16}, H_{16} \}$;
		\item[$\bullet$] $n=4$ if  $G\cong\{ Q_{16}, C_4 \ltimes C_4, Q_8 \times C_2 \}$.
	\end{enumerate}
\end{theorem}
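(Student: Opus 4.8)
The plan is to treat Theorem~\ref{theorem_G16} as a bookkeeping statement rather than a fresh computation. Since there are exactly nine non-abelian groups of order $2^4$ up to isomorphism (the $14$ groups of order $16$ minus the $5$ abelian ones), it suffices to exhibit each of the nine groups appearing in the statement, match it to a result already established in Sections~$2$ and~$3$, and read off the constant $n$. The content of the theorem is then entirely the assertion that these nine results, taken together, are complete and consistent with the target exponent $\frac{1}{2}(\order{G}+\order{G_T})-1$.

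First I would dispose of the three groups whose order was computed directly in this section: Lemma~\ref{lemma_semidihedral} gives $n=2$ for $D^-_{16}$, Lemma~\ref{lemma_modular} gives $n=2$ for $M_{16}$, and Lemma~\ref{lemma_DYC4} gives $n=1$ for $D_8\Y C_4$. Next, the semidirect product $C_4\ltimes C_4$ is handled by Lemma~\ref{lemma_feldirekt} with $E$ trivial, yielding $n=4$, and $H_{16}$ is the case $n=4$ of Corollary~\ref{lemma_G3}, giving $n=2$. The two direct products $D_8\times C_2$ and $Q_8\times C_2$ fall under Corollary~\ref{corollary_masodrendu}, since $D_8$ and $Q_8$ are the dihedral and generalized quaternion groups of order $2^3$ and $C_2$ is elementary abelian; these give $n=1$ and $n=4$ respectively. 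Finally $D_{16}$ and $Q_{16}$ are covered directly by Lemma~\ref{roza}~(i) and~(ii), giving $n=1$ and $n=4$. Collecting these nine values reproduces exactly the three lists in the statement, and the proof is complete.

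Because all the genuine work is carried by the earlier results, the only real obstacle is guaranteeing correctness of the collation, and I would check two points carefully. First, \emph{completeness}: the nine listed groups must coincide with the full classification of non-abelian groups of order $16$, with none omitted and none counted twice. Here the delicate part is verifying the identifications against a standard presentation list, paying particular attention to recognizing $D_8\Y C_4$ correctly (it is isomorphic to $Q_8\Y C_4$) and to distinguishing $M_{16}$, $D^-_{16}$, and $H_{16}$, whose defining relations are superficially similar. Second, for the cases routed through Corollaries~\ref{corollary_masodrendu} and~\ref{lemma_G3} I would confirm that the hypotheses genuinely apply — in particular that the relevant factor groups lie in $\Theta$ and that the value of $\order{G_T}$ used in each invoked result yields precisely the exponent $\frac{1}{2}(\order{G}+\order{G_T})-1$. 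Once these verifications pass, the theorem follows at once by assembling the nine computed orders.
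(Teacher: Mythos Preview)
Your proposal is correct and matches the paper's approach exactly: the paper presents Theorem~\ref{theorem_G16} as a summary statement without a separate proof environment, merely noting that the preceding lemmas and corollaries cover all non-abelian groups of order $2^4$. Your explicit case-by-case collation (matching each of the nine groups to Lemma~\ref{roza}, Corollary~\ref{corollary_masodrendu}, Lemma~\ref{lemma_feldirekt}, Corollary~\ref{lemma_G3}, and Lemmas~\ref{lemma_semidihedral}--\ref{lemma_DYC4}) is precisely the intended argument, spelled out in more detail than the paper itself provides.
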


Based on Theorems \ref{theorem_main} and \ref{theorem_G16} we can compute 
$\order{V_*(FG)}$ for many larger group algebras.

\section{Isomorphism Problem of Unitary Subgroups}

In this section we deal with the $*$-unitary isomorphism problem.

\begin{theorem}\label{theorem_UI}
	Let $F$ be a finite field of characteristic two and $G$ and $H$ are non-abelian $2$-groups of order at most $2^4$. Then $V_*(FG)\cong V_*(FH)$ implies that $G\cong H$.
\end{theorem}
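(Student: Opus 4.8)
The plan is to use $\order{V_*(FG)}$ as a first, coarse invariant and a finer invariant of the finite $2$-group $V_*(FG)$ to settle the residual coincidences. Since $V_*(FG)\cong V_*(FH)$ forces $\order{V_*(FG)}=\order{V_*(FH)}$, I would first tabulate this order for all eleven non-abelian $2$-groups of order at most $2^4$: the groups $D_8,Q_8$ of order $8$ by \lemref{roza}, and the nine groups of order $16$ by \thmref{theorem_G16}. In every case the order has the shape $n\cdot\order{F}^{e}$ with $e=\tfrac12(\order{G}+\order{G_T})-1$ and $n\in\{1,2,4\}$, so with $q=\order{F}=2^{f}$ the order is $2^{fe+\log_2 n}$, and I would compare the exponents $fe+\log_2 n$ as functions of $f$.

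A direct inspection shows that the exponent pins down $G$ except for a short list of collisions. The pair $C_4\ltimes C_4$ and $Q_8\times C_2$ has the same $\order{G_T}=4$ and the same $n=4$, hence the identical order $4\cdot\order{F}^{9}$ over \emph{every} field; this is the genuine, field-independent coincidence. The other collisions occur only over the smallest field: for $q=2$ one gets the families $\{D_8,Q_8\}$, $\{M_{16},Q_{16}\}$, $\{D_{16},H_{16}\}$ and $\{D_8\Y C_4,\,D^-_{16},\,C_4\ltimes C_4,\,Q_8\times C_2\}$ of equal order, while for $q\geq 4$ only the pair $\{C_4\ltimes C_4,Q_8\times C_2\}$ remains. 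Outside these families $G$ is already determined by $\order{V_*(FG)}$.

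To split the colliding families I would use the number of involutions $\iota(G):=\order{\{\,u\in V_*(FG)\mid u^2=1\,\}}$, clearly an isomorphism invariant. In characteristic two $u^2=1$ gives $u=1+z$ with $z^2=0$, and the unitary condition then forces $u^*=u$, i.e.\ $z$ is $*$-symmetric with zero augmentation; thus $\iota(G)=\order{\{\,z\in W(G)\mid z^2=0\,\}}$, where $W(G)$ is the $F$-space of symmetric augmentation-zero elements, with $\dim_F W(G)=e$. Counting this square-zero locus is a finite computation that I expect to pass through the map $\tau(x)=x^2+x$ of \lemref{lemma_field1} and the solution count of system \eqref{system} in \lemref{lemma_field}. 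For the pairs $D_8$ vs.\ $Q_8$, $M_{16}$ vs.\ $Q_{16}$ and $D_{16}$ vs.\ $H_{16}$ this separates the two members; a useful sample fact is that in $FQ_8$ the whole space $W$ is square-zero, so $\iota(Q_8)=\order{F}^{4}$, a degeneracy not shared by $D_8$. Within the four-member family the three distinct dimensions $e\in\{11,10,9\}$ are expected to yield three different involution counts, peeling off $D_8\Y C_4$ and $D^-_{16}$ and leaving only the last pair.

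The real obstacle is the field-independent pair $C_4\ltimes C_4$ and $Q_8\times C_2$, for which $\order{V_*(FG)}$, $\order{G_T}$ and $e=9$ all agree. Here I would compute $\iota$ explicitly on each side — on $Q_8\times C_2$ by writing $F(Q_8\times C_2)=FQ_8[\epsilon]/(\epsilon^2)$, so that $z^2=0$ decouples into a square-zero condition in $FQ_8$ and a commutator condition, and analogously for $C_4\ltimes C_4$. If $\iota$ happened to coincide I would escalate to the next invariants: the number of solutions of $u^4=1$, or the orders of $\zeta(V_*(FG))$ and of the derived subgroup $V_*(FG)'$. The asymmetry I expect to detect is that $Q_8\times C_2$ carries a central direct factor of order two, giving it the elementary abelian abelianization $C_2\times C_2\times C_2$, whereas $C_4\ltimes C_4$ has abelianization $C_4\times C_2$; this structural difference should surface in one of the refined counts and finish the last case.
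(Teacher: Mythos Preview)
Your collision analysis is correct and lines up with the paper's setup, but the proposal stops exactly where the real work begins: you describe \emph{which} secondary invariants you would compute without actually computing any of them. For the residual pair $C_4\ltimes C_4$ versus $Q_8\times C_2$ you write ``I would compute $\iota$ explicitly'' and then ``if $\iota$ happened to coincide I would escalate'' --- that is a research plan, not a proof. The square-zero locus inside the nine-dimensional space $W(G)$ is a nontrivial count over an arbitrary $F$, and you give no indication of how it comes out or why the two answers differ. Your closing remark about the abelianizations $C_2^3$ versus $C_4\times C_2$ is a fact about the base groups $G$ and $H$, not about $V_*(FG)$ and $V_*(FH)$; nothing in the proposal connects the two.

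The paper closes this gap by a different and shorter route. The non-abelian groups of order $8$, and all groups of order $16$ over the two-element field, are handled by citation to earlier work, so your entire $q=2$ involution program is unnecessary. For $\order{F}>2$ the order table from \thmref{theorem_G16} leaves only the pair $\{C_4\ltimes C_4,\,Q_8\times C_2\}$, exactly as you found. To separate these the paper uses the \emph{image} of the squaring map rather than its kernel: writing $x=x_1+x_2b\in V_*(F(Q_8\times C_2))$ with $x_1,x_2$ in the commutative subalgebra $F\langle a,c\rangle$, a short direct calculation (using that one of $x_1,x_2$ is a unit, the description of the annihilator of $1+a^2$, and the known structure of $V_*(F\langle a,c\rangle)$) forces $x^2\in\{1,a^2\}$ for every unitary $x$, whence $\order{V_*^2(F(Q_8\times C_2))}=2$. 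On the other side one simply notes that $(C_4\ltimes C_4)^2\subseteq V_*^2(F(C_4\ltimes C_4))$ already has four elements. This is the concrete asymmetry your last paragraph is gesturing at, made precise and actually proved.
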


\begin{proof}
    According to \cite{Creedon_Gildea_I} and \cite{Creedon_Gildea_II} the theorem is true for non-abelian groups of order $2^3$. The theorem is also true for groups of order $2^4$ when $F$ is the field of two elements by \cite{Balogh_Bovdi_uniter} and \cite{Bovdi_Erdei_II}.
    
    Suppose that $|F|>2$ and $\order{G}=2^4$. Theorem \ref{theorem_G16} yields that $|V_*(FG)|= |V_*(FH)|$ if and only if $G$ is either $C_4 \ltimes C_4$ or $Q_8 \times C_2$. 
    Without loss of generality we can assume that $G\cong Q_8 \times C_2$ and $H \cong C_4 \ltimes C_4$. Let $M$ be the abelian subgroup of $G$ generated by $a$ and $c$, where $Q_8=\gp{a,b\;|\;a^4=1,a^2=b^2,ba=a^3b}$ and $C_2=\gp{c\;|\;c^2=1}$. Then every element $x\in V(FG)$ can be written as $x=x_1+x_2b$, where $x_1,x_2\in FM$. We can compute that $xx^*=x_1x_1^*+x_2x_2^*+(x_1x_2+x_1x_2a^2)b$.  Similarly,
    \begin{equation}\label{power}
        x^2=x_1^2+x_2x_2^*a^2+(x_1x_2+x_1^*x_2)b.
    \end{equation}

    Thus for any $x\in V_*(FG)$ we have that $x_1x_1^*=x_2x_2^*+1$ and $x_1x_2=x_1x_2a^2$.
    Since $x$ is a unit either $x_1$ or $x_2$ must be a unit.
    
    Now suppose that $x_1$ is a unit. From the equality $x_1x_2=x_1x_2a^2$ we conclude that $x_2(1+a^2)=0$. According to Theorem $11$ in \cite{Hill_I} $x_2$ can be written as  $x_2=\alpha_0(1+a^2)+\alpha_1(1+a^2)a+\alpha_2(1+a^2)c+\alpha_4(1+a^2)ac$ for some $\alpha_i\in F$. By equation \ref{power} and the fact that $x_1+x_1^*=\beta_0(1+a^2)+\beta_1(1+a^2)a+\beta_2(1+a^2)c+\beta_4(1+a^2)ac$, $\beta_i\in F$ we conclude that $x^2=x_1^2$ and $x_1x_1^*=1$. From part $2$ of Theorem $2$ in \cite{Bovdi_Szakacs_III} it follows that $V_*(FM)\cong M \times N$, where $N$ is an elementary abelian group. Therefore either $x^2=1$ or $x^2=a^2$.
    
    Now suppose that $x_2$ is a unit. From equation $x_1x_2=x_1x_2a^2$ we conclude that $x_1(1+a^2)=0$. Therefore $x_1$ can be written as $x_1=\alpha_0(1+a^2)+\alpha_1(1+a^2)a+\alpha_2(1+a^2)c+\alpha_4(1+a^2)ac$ for some $\alpha_i\in F$ by Theorem $11$ in \cite{Hill_I}. Equation \ref{power} and $x_2x_2^*=x_1x_1^*+1=1$ and $x_1+x_1^*=0$ imply that $x^2=x_1^2+1=1$. 
    
    We have proved that if $x\in V_*(FG)$, then $x^2$ equals either $1$ or $a^2$. Thus $|V_*^2(FG)|=|\gp{a^2}|=2$.
    
    Let us consider the $*$-unitary subgroup $V_*(FH)$, where $H\cong C_4 \ltimes C_4$. Since $(C_4 \ltimes C_4)^2 \subseteq V_*^2(FH)$, we have that $|V_*(FH)|> 2$ which proofs that $V_*(FG)$ and $V_*(FH)$ are not isomorphic groups.

\end{proof}

\bibliographystyle{abbrv}
\bibliography{uniter_Modular}	

\end{document}